\newtheorem{theorem}{Theorem}
\newtheorem{lemma}[theorem]{Lemma}
\newtheorem{conjecture}[theorem]{Conjecture}
\newtheorem{question}[theorem]{Question}
\newcommand{\game}{\chi_{\rm{g}}}
\begin{document}
\title{The Game Chromatic Number of Trees and Forests}
\author[C.\ Dunn]{Charles Dunn}
\address{Linfield College\\
McMinnville, OR 97128}
\email{cdunn@linfield.edu}
\author[V.\ Larsen]{Victor Larsen}
\address{Emory University\\
Atlanta, GA 30322}
\email{vlarsen@emory.edu}
\author[K.\ Lindke]{Kira Lindke}
\address{Michigan Technological University\\
Houghton, MI 49931}
\email{kldurand@mtu.edu}
\author[T.\ Retter]{Troy Retter}
\address{Emory University\\
Atlanta, GA 30322}
\email{troyretter@gmail.com}
\author[D.\ Toci]{Dustin Toci}
\email{iamtoci@gmail.com}
\thanks{Partially supported by the NSF grant DMS
0649068, Linfield College}
\keywords{competitive coloring, forest, tree}

\subjclass[2000]{05C15, 05C57}
\date{September 2010}

\begin{abstract}
While the game chromatic number of a forest is known to be at most 4, no simple criteria are known for determining the game chromatic number of a forest. We first state necessary and sufficient conditions for forests with game chromatic number 2 and then investigate the differences between forests with game chromatic number 3 and 4. In doing so, we present a minimal example of a forest with game chromatic number 4, criteria for determining the game chromatic number of a forest without vertices of degree 3, and an example of a forest with maximum degree 3 and game chromatic number 4.
\end{abstract}

\maketitle
\section{Introduction}

The map-coloring game was conceived by Steven Brams and first published in 1981 by Martin Gardner in \emph{Scientific American} \cite{G81}. However, the game was not well investigated in the mathematics community until Bodlaender reinvented it in 1991 \cite{B91} within the contexts of graphs. While the game has now been examined extensively \cite{DZ99,FKKT93,GZ99,K00,KT94,Z99,Z00}, and gone through numerous generalizations and variations \cite{CZ,CWZ00,D07,DNNPSS11,DK1,DK2,DK3,LSX99}, we now present the standard version of the original vertex coloring game.

The \emph{t-coloring game} is played between Alice and Bob on a finite graph $G$ with a set $C$ of $t$ colors. In the game, a color $\alpha \in C$ is \emph{legal} for a vertex $v$ if $v$ does not have a neighbor colored with $\alpha$.  Play begins with Alice coloring an uncolored vertex with a legal color, and progresses with Alice and Bob alternating turns coloring uncolored vertices with legal colors from $C$. If at any point there is an uncolored vertex that does not have a legal color available, Bob wins. Otherwise, Alice will win once every vertex becomes colored. The \emph{game chromatic number} of a graph $G$, denoted $\game(G)$, is the least $t$ such that Alice has a winning strategy when the $t$-coloring game is played on $G$. Clearly $\chi(G) \leq \game(G) \leq \Delta(G)+1$, where $\Delta(G)$ is the maximum degree in $G$.

In \cite{B91}, Bodlaender provided an example of a tree with game chromatic number 4 and proved that every tree has game chromatic number of at most 5. Faigle et al.\ \cite{FKKT93} were able to improve this bound.  They proved the stronger result that any forest has \emph{game coloring number} of at most 4; that is to say if the coloring game is played with an unlimited set of colors, Alice can guarantee each vertex becomes colored before 4 of its neighbors are colored. Thus, if the coloring game is played on any forest, Alice can play in a way such that every uncolored vertex has at most 3 colored neighbors, thereby guaranteeing every uncolored vertex has an available color if 4 colors are used. No previous literature has investigated the distinction between forests with different game chromatic numbers, which is the topic of this paper.

In the first two sections, we highlight the basic strategies used by Alice and Bob throughout the paper. In Section 4, we provide necessary and sufficient conditions for a forest to have game chromatic number 2.  Necessary and sufficient conditions are trivial for forests with game chromatic number 0 and 1, so the remaining part of the paper investigates the differences between forests with game chromatic number 3 and 4. In the 5th section, we present a minimal example of a forest with game chromatic number 4. Sections 6 and 7 provide criteria to determine the game chromatic number of a forest without vertices of degree 3.  Finally, Section 8 provides an example of a forest with maximum degree 3 and game chromatic number 4.

\section{Alice's Strategy: Trunks and the Modified Coloring Game (MCG)}

Suppose that $F$ is a partially colored forest. We call $R$ a \emph{trunk of $F$} if $R$ is a maximal connected subgraph of $F$ such that every colored vertex in $R$ is a leaf of $R$. An example is shown in Figure \ref{F:trunks}. Denote the set of trunks in $F$ by $\mathcal{R}(F)$. Observe that each uncolored vertex in $F$ appears in exactly one trunk in $\mathcal{R}(F)$, and each colored vertex in $F$ of degree $d$ will appear in exactly $d$ trunks of $\mathcal{R}(F)$. By this definition, every trunk contains an uncolored vertex except for the trunk composed of two colored adjacent vertices, and every vertex apart from an isolated colored vertex appears in at least one trunk. With this in mind, we will refer the unique trunk containing the uncolored vertex $u$ as $R_u$.

\begin{figure}[ht]
\leavevmode
\begin{center}
\includegraphics[width=.75\linewidth]{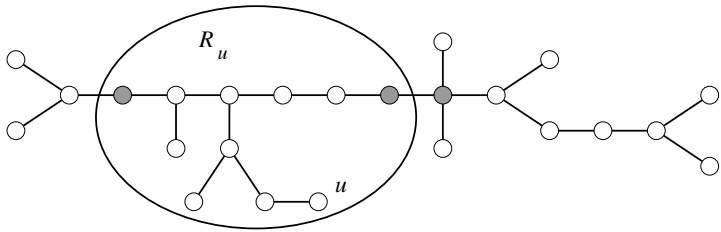}
\end{center}
\caption{An example of a partially-colored forest with six trunks, one of which is circled.}\label{F:trunks}
\end{figure}

The \emph{$k$-Modified Coloring Game ($k$-MCG)} played on a partially-colored forest $F$, is the same as the $k$-coloring game with the following exceptions:
\begin{enumerate}
\renewcommand{\labelenumi}{$\bullet$}
\setlength{\itemsep}{0pt}
\setlength{\parskip}{0pt}
\setlength{\parsep}{0pt}
\item{Bob plays first, and}
\item{Bob can choose to pass.}
\end{enumerate}

\begin{lemma} \label{l:mcg1}
Let $F$ be a partially colored forest. If Alice can win the $k$-MCG on $\mathcal{R}(F)$ she can win the $k$-coloring game on $F$.
\end{lemma}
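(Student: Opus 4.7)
The plan is to convert a winning strategy $\sigma$ for Alice in the $k$-MCG into a winning strategy for her in the $k$-coloring game by having her simulate, in her head, an imaginary $k$-MCG running in lockstep with the real coloring game. The mismatch in turn order---Alice plays first in the coloring game, whereas Bob plays first in the MCG---is absorbed by pretending that Bob opens the imaginary MCG with a pass; from then on, everything is done by mimicry.

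In detail, the steps are as follows. First, at the very beginning of the coloring game, Alice treats the imaginary MCG as having begun with Bob passing, consults $\sigma$, and plays the prescribed response as her actual first move. Second, on every subsequent round, Alice reads Bob's real move as Bob's next (non-pass) move in the imaginary MCG, updates her internal state accordingly, consults $\sigma$, and plays the dictated response in the real game. Third, one checks inductively that after each of Alice's turns, the partial coloring of $F$ in the imaginary MCG and the real game coincide; in particular, any move legal in one is legal in the other, so Bob's actual moves are always legal MCG moves and $\sigma$'s responses are always legal coloring-game moves. Finally, since $\sigma$ wins the MCG, the imaginary game terminates with every vertex of $F$ legally colored, so the real game terminates in the same state and Alice wins.

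The main step to justify is that move legality and trunk evolution agree between the MCG on $\mathcal{R}(F)$ and the coloring game on $F$. Fortunately this is already implicit in the definition of trunk: every uncolored $u \in F$ lies in a unique trunk $R_u$ that contains all of $u$'s $F$-neighbors (uncolored neighbors of $u$ belong to $R_u$ by maximality, and any colored neighbor of $u$ appears in $R_u$ as a leaf adjacent to $u$). Consequently the set of colors legal for $u$ and the structural update caused by coloring $u$ are identical in the two games, and the simulation runs cleanly. Apart from this bookkeeping, there is no genuine obstacle: the content of the lemma is just the observation that an MCG strategy is strictly stronger than what the coloring game requires, with Bob's opening pass in the imaginary game playing the role of Alice's first move in the real game.
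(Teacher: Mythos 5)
Your proposal is correct and follows essentially the same route as the paper: the paper's one-line proof asserts that the coloring game on $F$ is equivalent to the $k$-MCG on $\mathcal{R}(F)$ because every uncolored vertex has the same neighborhood in both, and your simulation argument (treating Alice's first real move as the response to an opening pass by Bob, then mimicking moves back and forth) is just an explicit unpacking of that equivalence together with the observation that the MCG only grants Bob extra options.
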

\begin{proof}
Playing the coloring game on a partially colored forest $F$ is equivalent to playing the $k$-MCG on $\mathcal{R}(F)$ because every uncolored vertex in $F$ has the same set of neighbors in $\mathcal{R}(F)$ as in $F$.
\end{proof}

\begin{lemma} \label{l:mcg2}
Let $F$ be a partially colored forest. If Alice can win the $k$-MCG on every trunk in $\mathcal{R}(F)$ she can win the $k$-coloring game on $\mathcal{R}(F)$.
\end{lemma}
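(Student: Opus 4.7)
The plan is for Alice to combine her winning strategies on individual trunks into one global strategy on $\mathcal{R}(F)$. The key structural fact I would invoke first is that, as a graph, $\mathcal{R}(F)$ is the disjoint union of its trunks: colored vertices of $F$ are duplicated once per trunk in which they appear, and uncolored vertices lie in exactly one trunk. Consequently, a move made inside one trunk never changes the legality of a color at a vertex of any other trunk, so the sub-games on distinct trunks do not interact.

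For each trunk $R$, fix a winning strategy $\sigma_R$ for the $k$-MCG on $R$. Alice's strategy on $\mathcal{R}(F)$ would be the natural ``parallel'' one. Whenever Bob colors a vertex, that vertex lies in a unique trunk $R$; Alice treats Bob's move as the next move in the sub-game on $R$ and replies with the move prescribed by $\sigma_R$, played in the same trunk. If Bob passes, Alice selects any trunk $R'$ that still contains uncolored vertices and regards this as a pass by Bob in the sub-game on $R'$, playing $\sigma_{R'}$'s response there. To verify correctness, I would argue by induction on the number of moves that, for each trunk $R$, the sequence of moves made inside $R$, in the order they occur, is a legal play of the $k$-MCG on $R$ in which Alice follows $\sigma_R$. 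Legality inside $R$ transfers to legality in $\mathcal{R}(F)$ thanks to the disjointness observation above, so Alice's prescribed move is always permissible in the global game.

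The one subtle step, which I expect to be the main obstacle, is the treatment of Bob's passes: Alice herself may not pass, so her simulation only makes sense as long as some trunk still has an uncolored vertex to play in. But this is exactly the condition for the global game on $\mathcal{R}(F)$ not yet to be over, so her response is always well-defined. Since each $\sigma_R$ wins its sub-game, every sub-game eventually finishes with all vertices of the trunk legally colored; taking the union over all trunks, every vertex of $\mathcal{R}(F)$ is colored, and Alice wins.
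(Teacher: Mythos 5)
Your proposal is correct and is essentially the paper's proof: Alice runs the trunk sub-games in parallel, answering Bob inside the trunk he just played in and absorbing his passes as passes in some trunk that still has an uncolored vertex, with the disjointness of trunks (every uncolored vertex has all its neighbors in its own trunk) guaranteeing the sub-games do not interact. The one case to spell out, as the paper does, is when Bob colors the \emph{last} uncolored vertex of a trunk $R$, so that $\sigma_R$ prescribes no reply there; Alice then diverts to any trunk that still has an uncolored vertex and plays as if Bob had passed in that sub-game, exactly as in your treatment of passes.
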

\begin{proof}
When the $k$-MCG is played on $\mathcal{R}(F)$, Alice pretends a separate game is being played on each trunk. Thus when Bob plays in a trunk, Alice responds by making a move in the same trunk using her winning strategy provided that the trunk has a remaining uncolored vertex. If Bob colors the last uncolored vertex in a trunk or passes, Alice chooses any trunk with an uncolored vertex and plays in this trunk as if Bob chose to pass.
\end{proof}

We now present the proof of Faigle et al.\ \cite{FKKT93} using the language of the MCG. 

\begin{lemma} \label{l:mcg3}
Let $F$ be a partially colored forest and suppose Alice and Bob are playing the $4$-MCG on  $\mathcal{R}(F)$. If every trunk in $\mathcal{R}(F)$ has at most 2 colored vertices, Alice can win the $4$-MCG on  $\mathcal{R}(F)$.
\end{lemma}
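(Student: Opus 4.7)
The plan is to reduce, via Lemma~\ref{l:mcg2}, to showing that Alice wins the $4$-MCG on a single trunk $R$ with at most two colored leaves. I will have Alice maintain the invariant that, immediately after each of her moves, every trunk of the current partial coloring contains at most two colored vertices. Since the trunk definition forces colored vertices to be leaves, this invariant will imply that right after each Alice move, every uncolored vertex has at most two colored neighbors.

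The key structural fact I will use is this. Fix a trunk $T$ with two colored leaves $u_1, u_2$, and let $P$ be the unique $u_1 u_2$-path in $T$. Coloring a vertex $w$ on $P$ separates $u_1$ and $u_2$ into distinct sub-trunks, so each sub-trunk contains $w$ together with at most one of $\{u_1, u_2\}$, preserving the invariant. Coloring an off-path vertex $v$, however, leaves $v, u_1, u_2$ together in a single sub-trunk $S$ with three colored leaves, violating the invariant. If $T$ has fewer than two colored leaves to begin with, coloring any uncolored vertex of $T$ trivially preserves the invariant.

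Alice's strategy will then be the following. If Bob passes, or his move leaves every current trunk with at most two colored leaves, Alice picks any trunk with an uncolored vertex and plays on its $u_1 u_2$-path (or on any uncolored vertex, if that trunk has fewer than two colored leaves). If instead Bob plays an off-path vertex $v$ in a two-leaf trunk and thereby creates a sub-trunk $S$ with three colored leaves $v, u_1, u_2$, Alice responds by coloring the Steiner vertex $x$ of $\{v, u_1, u_2\}$ in $S$, i.e., the unique vertex lying on each of the three pairwise paths among $v, u_1, u_2$. I would then check that $x$ is an uncolored internal vertex of $S$ (it must be, since $v, u_1, u_2$ are leaves of $S$ and hence cannot be the Steiner vertex), and that coloring $x$ splits $S$ into sub-trunks, three of which contain $x$ together with exactly one of $\{v, u_1, u_2\}$, thereby restoring the invariant.

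The main obstacle will be verifying the legality of each of Alice's intended moves. When she moves with the invariant intact, her chosen vertex has at most two colored neighbors, so at least two of the four colors are available. In the Steiner-vertex case the invariant is temporarily broken, but $x$ still has at most three colored neighbors (only $v, u_1, u_2$ within $S$), so at least one of the four colors remains legal. Since each Alice move colors a new vertex and either preserves or restores the invariant, the game terminates with every vertex colored, and Alice wins.
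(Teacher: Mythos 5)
Your proposal is correct and follows essentially the same route as the paper: maintain the invariant that after each of Alice's moves every trunk has at most two colored vertices, respond to a trunk with three colored leaves by coloring the branch (Steiner) vertex that pairwise separates them and has at most three colored neighbors, and otherwise play on the path between the two colored leaves of some trunk. The paper phrases this as an induction on the number of uncolored vertices, but the strategy and key observations are the same.
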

\begin{proof}
We proceed by induction on the number $n$ of uncolored vertices in $\mathcal{R}(F)$ and note Alice has won when $n=0$. If $n>0$, observe that after Bob's first move there can be at most one trunk with 3 colored vertices. If such a trunk exists, the trunk is a tree that has 3 colored leaves. In this tree there must be an uncolored vertex of degree at least 3 whose deletion will pairwise disconnect the leaves. Since this vertex has at most 3 colored neighbors, Alice can color this vertex with some available color. This creates a partially colored graph whose trunks each have at most 2 uncolored vertices, placing Alice in a winning position. If no trunk has 3 colored vertices and Alice has not already won, Alice chooses to play in any trunk that has an uncolored vertex. If this trunk has at most 1 colored vertex, Alice can color any vertex to reach a winning position. If this trunk has two colored vertices $x$ and $y$, Alice can color any vertex on the unique $x,y$-path to reach a winning position.
\end{proof}

\begin{theorem}[Faigle et al.]\label{t:mcg1}
For any Forest $F$,  $\game (F) \leq 4$.
\end{theorem}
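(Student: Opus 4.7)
The plan is to reduce the theorem directly to Lemma \ref{l:mcg3} by having Alice make a single preparatory opening move. First I would let Alice color an arbitrary vertex of $F$ with any color; this is always legal since $F$ is initially uncolored. Call the resulting partially colored forest $F'$; it contains exactly one colored vertex $v$.

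Next I would verify the hypothesis of Lemma \ref{l:mcg3} for $F'$. The vertex $v$ lies in exactly $\deg(v)$ trunks of $\mathcal{R}(F')$, contributing one colored vertex to each, while every other trunk is wholly uncolored. Hence every trunk in $\mathcal{R}(F')$ has at most one colored vertex, which is certainly at most $2$.

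Finally, after Alice's opening move it is Bob's turn on the partially colored forest $F'$. By Lemma \ref{l:mcg1}, the remainder of the $4$-coloring game on $F'$ is equivalent to the $4$-MCG on $\mathcal{R}(F')$; and by Lemma \ref{l:mcg3}, Alice has a winning strategy for this $4$-MCG. The only minor subtlety is that, in the standard coloring game, Bob cannot pass, whereas in the MCG he may; but this just restricts Bob's options relative to the MCG, so Alice's MCG strategy transfers verbatim. Combining these observations gives $\game(F) \le 4$.

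There is essentially no obstacle remaining: the real combinatorial content has been packaged into Lemma \ref{l:mcg3}, and the only work for this theorem is the bookkeeping around Alice's opening move and noting that the hypothesis ``at most $2$ colored vertices per trunk'' holds after a single coloring.
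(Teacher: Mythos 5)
Your proof is correct and follows essentially the same route as the paper: reduce to Lemma \ref{l:mcg3} via the MCG/trunk machinery of Lemma \ref{l:mcg1}. The only difference is that your preparatory opening move is unnecessary — the paper applies Lemma \ref{l:mcg3} directly to the uncolored forest, since every trunk of $\mathcal{R}(F)$ then has zero colored vertices, which already satisfies the ``at most $2$'' hypothesis.
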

\begin{proof}
Every trunk in $\mathcal{R}(F)$ initially has no colored vertices, so Alice can win the $4$-MCG on $\mathcal{R}(F)$ by Lemma \ref{l:mcg3}. By Lemma \ref{l:mcg1}, this implies Alice can win the 4-coloring game on $F$.
\end{proof}

\section{Bob's Strategy: The Expanded Coloring Game (ECG), Winning Subgraphs, and Winning Moves}

The \emph{$k$-Expanded Coloring Game} (\emph{$k$-ECG}) played on a partially colored forest $F$ is the same as the $k$-coloring game with the following exceptions:
\begin{enumerate}
\renewcommand{\labelenumi}{$\bullet$}
\setlength{\itemsep}{0pt}
\setlength{\parskip}{0pt}
\setlength{\parsep}{0pt}
\item{Alice can choose not to color a vertex on her turn, and}
\item{If Alice chooses not to color a vertex on her turn, she can choose to add a single colored leaf to $F$.}
\end{enumerate}

\begin{lemma} \label{l:ecg}
Let $F$ be a partially colored forest and let $F'$ be an induced connected subgraph of $F$. If for every $v\in V(F')$, $v$ has no colored neighbors in $V(F) \setminus V(F')$ and Bob can win the $k$-ECG on $F'$, then Bob can win the $k$-coloring game on $F$.
\end{lemma}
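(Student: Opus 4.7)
The plan is to have Bob play the $k$-coloring game on $F$ by simulating a $k$-ECG on $F'$, inside of which he follows his given winning strategy. Bob will maintain a ``virtual'' partially colored forest that starts as $F'$ and grows only through Alice's leaf-additions. The invariant to maintain is that every uncolored vertex $v\in V(F')$ has the same colored neighbors, with the same colors, in the virtual forest as it does in the real game on $F$; consequently the set of legal colors at $v$ agrees in the two games.

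The first key observation is that since $F$ is a forest and $F'$ is induced and connected, every $w\in V(F)\setminus V(F')$ has at most one neighbor in $V(F')$, because two such neighbors together with $w$ and an $F'$-path would form a cycle. This allows Bob to translate each of Alice's real moves into a legal ECG action. If Alice colors a vertex of $V(F')$, Bob copies the move into the virtual game. If Alice colors $w\notin V(F')$ with color $c$ and $w$ has a unique neighbor $v\in V(F')$, Bob interprets this as Alice declining to color on her turn and instead attaching to $v$ a colored leaf of color $c$; if $w\notin V(F')$ has no neighbor in $V(F')$, Bob interprets Alice's move as a simple pass. When it is Bob's turn, his ECG strategy prescribes coloring some vertex $x$ of the virtual forest; since every added leaf is born already colored, $x$ must lie in $V(F')$, and Bob colors $x$ with the prescribed color in the real game.

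A short induction confirms the invariant. The base case is precisely the hypothesis that no $v\in V(F')$ has a colored neighbor in $V(F)\setminus V(F')$. Each Alice move inside $V(F')$ changes the real and virtual games identically; each outside move either leaves the colored neighborhoods of $V(F')$ unchanged or adds exactly one colored neighbor of color $c$ to the unique $v\in V(F')$ adjacent to $w$, which is matched in the virtual game by the corresponding leaf-addition. Bob's moves are identical in both games. Hence a color is legal at $v\in V(F')$ in the real game iff it is legal in the virtual game.

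When Bob's ECG strategy wins, some uncolored vertex $v\in V(F')$ has no legal color in the virtual game, so by the invariant $v$ has no legal color in the real game either, and Bob wins the $k$-coloring game on $F$. I expect the only delicate step to be the case analysis for Alice's moves outside $F'$; that step relies entirely on the acyclicity observation above, which guarantees the translation to a legal ECG action is unambiguous.
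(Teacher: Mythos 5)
Your proof is correct and follows essentially the same simulation argument as the paper: Bob pretends the $k$-ECG is being played on $F'$, translating Alice's moves adjacent to $F'$ into colored-leaf additions and her other outside moves into passes. Your explicit observation that each vertex outside $V(F')$ has at most one neighbor in $V(F')$ (so the leaf-addition translation is unambiguous) is a detail the paper leaves implicit, but the approach is the same.
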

\begin{proof}
When the $k$-coloring game is played on $F$, Bob pretends that the $k$-ECG is being played on $F'$. When Alice colors a vertex in $N(F')\setminus F'$, Bob pretends Alice added an appropriately colored leaf to $F'$. Similarly when Alice colors a vertex not adjacent to any vertex in $V(F')$, Bob plays in $F'$ as if Alice has just passed.
\end{proof}

When Bob is playing the $k$-ECG or $k$-coloring game, we call coloring a vertex $v$ with $\alpha$ a \emph{winning move} if $v$ is uncolored, $\alpha$ is legal for $v$, and coloring $v$ with $\alpha$ will make some vertex uncolorable. Clearly Bob can win if it is his turn and there is a winning move. If coloring a vertex $v$ with $\alpha$ is a winning move and it is Alice's turn, Bob can win on his next turn unless Alice colors $v$ or colors a neighbor of $v$ with $\alpha$. We refer to two winning moves as \emph{disjoint} if the distance between the two target vertices is more than 2. If a forest has two disjoint winning moves on Alice's turn, Bob can win the $k$-ECG or $k$-coloring game. Hence when Bob wins the $k$-ECG on $F'$, there will be an uncolorable vertex in $F$.

\section{Forests with Game Chromatic Number 2}

We now classify all trees and forests with game chromatic number 2.  What is most interesting here is that the characteristics are actually different for the two classes of graphs.

\begin{lemma} \label{l:gcnp4}
Bob can win the $2$-ECG on an uncolored $P_5 := v_1,v_2,v_3,v_4,v_5$.
\end{lemma}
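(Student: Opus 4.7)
The plan is to case-split on Alice's first move in the $2$-ECG, which has three options: (i) coloring a vertex of $P_5$, (ii) passing without adding a leaf, or (iii) passing and adding a single colored leaf.

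In case (i), suppose Alice colors $v_j$ with color $c \in \{1,2\}$. Bob responds by coloring a vertex $v_k$ at distance exactly $2$ from $v_j$ along the path with the opposite color $3-c$; for $j \in \{1,2,4,5\}$ the choice of $v_k$ is forced, and for $j=3$ Bob picks $k \in \{1,5\}$. Then the unique vertex between $v_j$ and $v_k$ has one neighbor of each color and is therefore uncolorable, so Bob wins immediately.

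In cases (ii) and (iii), Bob aims to color $v_3$ so that, after his move, coloring $v_1$ with some color $\alpha$ and coloring $v_5$ with the same color $\alpha$ are both winning moves (making $v_2$ and $v_4$ respectively uncolorable). Since $d(v_1,v_5) = 4 > 2$, these will be disjoint winning moves on Alice's subsequent turn, so Bob wins by the observation concluding Section~3. In case (ii), Bob plays $v_3 = 1$; then $v_1 = 2$ renders $v_2$ uncolorable and $v_5 = 2$ renders $v_4$ uncolorable. In case (iii), with Alice's leaf $w$ attached at $v_i$ with color $c$, Bob plays $v_3 = 3-c$ when $i = 3$, $v_3 = c$ when $i \in \{1,5\}$, and either color when $i \in \{2,4\}$. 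In each sub-case one checks that $v_3$'s color is legal and that the two potential winning moves at $v_1$ and $v_5$ (with the opposite color to $v_3$'s) remain legal and still force $v_2$ and $v_4$ to have both colors in their neighborhoods.

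The main obstacle is the bookkeeping in case (iii), which requires verifying for each possible attachment site and color of $w$ that Bob's chosen color of $v_3$ preserves two legal winning moves. Once this is done, the fact that Alice cannot neutralize two disjoint winning moves in a single turn (whether by coloring, passing, or another leaf addition) follows from the distance condition on $v_1$ and $v_5$, since any one Alice-move only alters the colored neighborhood of at most one existing vertex.
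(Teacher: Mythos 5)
Your proposal is correct, and its overall skeleton (case-split on Alice's first ECG option; distance-two response when she colors a path vertex; coloring $v_3$ to set up two disjoint winning moves at $v_1$ and $v_5$ when she passes) matches the paper. The one place you diverge is the leaf-addition case: the paper treats an added colored leaf exactly like a colored vertex, so Bob simply colors a vertex at distance two from the leaf (i.e.\ the path-neighbor of the attachment vertex $v_i$) with the other color, which makes $v_i$ itself uncolorable and wins on the spot, with no sub-case analysis. You instead route this case through the $v_3$/disjoint-winning-moves mechanism, which works but forces the bookkeeping over the attachment site and leaf color that you flag as the main burden; note also that for a leaf at $v_2$ or $v_4$ your ``either color'' choice $3-c$ at $v_3$ actually wins immediately (the attachment vertex is already uncolorable), so the uniform claim about two threats at $v_1$ and $v_5$ is superfluous there, though harmless. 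In short: the paper's observation that a colored leaf can be killed by the same distance-two move as a colored path vertex buys a one-line treatment of your case (iii); your version is correct but longer.
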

\begin{proof}
If Alice colors a vertex of the $P_5$ or adds a colored leaf to the $P_5$ of some color $\alpha$, Bob can color a vertex at distance two away with a different color $\beta$. At this point, the vertex between the two colored vertices has no legal color available. Otherwise if Alice chooses to pass, Bob colors $v_3$ with $\alpha$. Coloring $v_1$ with $\beta$ and coloring $v_5$ with $\beta$ are now disjoint winning moves.
\end{proof}

\begin{lemma} \label{l:gcnp3}
Bob can win the $2$-ECG on an uncolored $P_4^+$, shown in Figure \ref{F:Bob}.
\end{lemma}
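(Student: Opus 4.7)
The plan is to case-split on Alice's first move, mirroring the proof of Lemma~\ref{l:gcnp4}. Label the vertices of $P_4^+$ so that $v_1 v_2 v_3 v_4$ is a path and $v_5$ is a pendant attached to $v_2$, making $v_2$ the unique vertex of degree~3.

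If Alice's opening move either colors a vertex of $P_4^+$ or attaches a colored leaf with color $\alpha$, then I would have Bob reply exactly as in Lemma~\ref{l:gcnp4}: pick a vertex at distance exactly two from the newly colored vertex and color it $\beta$. A case check confirms that such a distance-two target always exists in $P_4^+$---every vertex has at least one vertex at distance two, and every attachment point has an original neighbor---so the unique vertex on the path between them sees both colors and becomes uncolorable on Bob's very next move.

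The harder case is when Alice passes, because $P_4^+$ is too small for a single Bob reply to produce two disjoint winning moves in the sense of Section~3. My plan is therefore a two-stage strategy: Bob first colors $v_1$ with color $\alpha$. This creates two threats against $v_2$, since coloring $v_3$ or $v_5$ with $\beta$ would each force $v_2$ to be uncolorable. To block both threats simultaneously, Alice is forced to color $v_2$ herself, and her only legal color there is $\beta$. Bob then responds by coloring $v_4$ with $\alpha$, so that $v_3$ has colored neighbors $v_2 = \beta$ and $v_4 = \alpha$ and has no legal color.

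The main obstacle is the bookkeeping in this last case: I need to enumerate Alice's alternative responses after Bob's first move $v_1 = \alpha$---coloring any of $v_3, v_4, v_5$ in either color, passing again, or attaching a colored leaf anywhere---and verify that each either self-defeats (immediately producing an uncolorable vertex on Alice's own turn) or leaves Bob with an immediate winning move, typically by coloring one of $v_3$ or $v_5$ with $\beta$ to close off $v_2$.
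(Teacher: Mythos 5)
Your proposal is correct and follows essentially the same route as the paper's proof: the same distance-two response when Alice colors a vertex or adds a leaf, and in the pass case the same key move (Bob colors the path endpoint adjacent to the degree-3 vertex, creating two $\beta$-threats against that vertex so Alice must color it with the second color, after which Bob finishes at the far end of the path). The ``bookkeeping'' you flag is exactly what the paper dispatches in one sentence: since both winning moves target the degree-3 vertex through different neighbors, no single alternative reply by Alice (coloring elsewhere, passing, or adding one colored leaf) can defuse both without immediately creating an uncolorable vertex, which is the verification you correctly sketch.
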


\begin{figure}[ht]
\leavevmode
\begin{center}
\includegraphics{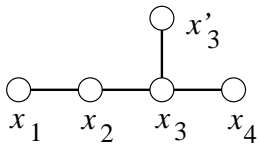}
\end{center}
\caption{The graph $P_4^+$.}\label{F:Bob}
\end{figure}

\begin{proof}
If Alice colors a vertex of the $P_4^+$ or adds a colored leaf to the $P_4^+$ of some color $\alpha$, Bob can win by coloring a vertex at distance two away with a different color $\beta$. Otherwise if Alice chooses to pass, Bob colors $x_4$ with $\alpha$. Since coloring $x_2$ or $x_3'$ with $\beta$ are both winning moves, Bob can immediately win if Alice does not color $x_3$ with $\beta$. If Alice colors $x_3$ with $\beta$, coloring $x_1$ with $\alpha$ is a winning move.
\end{proof}

\begin{theorem}\label{t:gcn2}
Let $F$ be a forest and let $\ell (F)$ be the length of the longest path in $F$. Then $\game(F)=2$ if and only if:
\begin{enumerate}
\renewcommand{\labelenumi}{$\bullet$}
\setlength{\itemsep}{0pt}
\setlength{\parskip}{0pt}
\setlength{\parsep}{0pt}
\item{$1 \leq \ell (F) \leq 2$ or}
\item{$\ell (F) = 3$, $|V(F)|$ is odd, and every component with diameter $3$ is a path.}
\end{enumerate}
\end{theorem}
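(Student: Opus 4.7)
The statement has two directions, and the $\ell(F)=3$ parity case is the crux in both.

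For necessity, the diameter bounds come from forbidden-subgraph arguments: if $\ell(F)\ge 4$, then $F$ contains an induced $P_5$ and Lemmas~\ref{l:gcnp4} and~\ref{l:ecg} give Bob a win in the $2$-coloring game; if $\ell(F)=3$ and some diameter-$3$ component is not a path, then that component has a vertex of degree $\ge 3$ on its central edge, hence contains an induced $P_4^+$, and Lemmas~\ref{l:gcnp3} and~\ref{l:ecg} finish the case. The remaining necessity step is the parity claim: if $\ell(F)=3$, every diameter-$3$ component is $P_4$, and $|V(F)|$ is even, then Bob wins. The key local fact is $\game(P_4)=3$ with Alice first: whatever Alice colors on a fresh $P_4$, Bob colors the far leaf---with the same color if Alice played a leaf, or with the opposite color if Alice played an internal vertex---making some interior vertex uncolorable. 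Bob's global strategy is to force Alice to be the one to open a previously uncolored $P_4$. His replies are (a) if Alice colors a leaf of a star with at least two leaves whose center is uncolored, Bob wins immediately by coloring another leaf with the opposite color; (b) if Alice plays safely in a non-$P_4$ component, Bob plays a safe reply in a non-$P_4$ component, preserving the invariant that every star either has its center uncolored with no colored leaves, or has its center colored with forced leaves; (c) if Alice opens a fresh $P_4$, Bob applies the local tactic. Since $P_4$'s contribute $4$ each to $|V(F)|$, the non-$P_4$ vertex count is even, and the parity forces case (c) eventually.

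For sufficiency, $1\le\ell(F)\le 2$ is handled by Alice prioritizing centers: every component is a star, and Alice colors the center of any threatened star (one with a colored leaf but an uncolored center), which is always possible since she moves first. For $\ell(F)=3$, $|V(F)|$ odd, and every diameter-$3$ component equal to $P_4$, the non-$P_4$ vertex count is odd, so at least one non-$P_4$ component has odd order---either a $K_1$ or a star $K_{1,2k}$ with $k\ge 1$. Alice's opening colors the isolated vertex, or the center of such a star, with color $1$; this parity shift puts the game in the position symmetric to Bob's necessity strategy with the roles swapped. Alice then plays reactively: if Bob opens a fresh $P_4$ by coloring $v_i$ with color $\alpha$, Alice responds with the antipodal vertex ($v_1\leftrightarrow v_4$, $v_2\leftrightarrow v_3$) using color $\bar\alpha$, after which the remaining two vertices of that $P_4$ are forced to consistent distinct colors; if Bob plays safely in a non-$P_4$ component, Alice plays safely in a non-$P_4$ component; if Bob plays a leaf of a star with uncolored center (threatening the center), Alice secures the center immediately with the opposite color. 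The even parity of remaining non-$P_4$ vertices ensures Bob is always the one to open each $P_4$, so Alice's antipodal response is always available.

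The main obstacle is keeping the parity argument airtight when several non-$P_4$ components of various orders are present and the players may hop between components mid-game. One must verify that ``safe'' replies always exist and are legal, which reduces to maintaining the star invariant stated in case (b). A convenient formalization is a pairing argument: in the necessity direction Bob fixes a pairing of non-$P_4$ vertices and answers inside each pair; in the sufficiency direction Alice's opening consumes the unpaired non-$P_4$ vertex and she maintains the remaining pairing. Together with the $P_4$ tactic and the antipodal response, this yields the full strategy.
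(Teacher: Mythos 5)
Your proposal is correct and takes essentially the same route as the paper: the same $P_5$ and $P_4^+$ lemmas combined with Lemma~\ref{l:ecg} for necessity, the same parity argument forcing Alice to open a fresh $P_4$ when $|V(F)|$ is even, and for sufficiency the same ``answer Bob inside the $P_4$ he just opened, otherwise reply safely among the non-$P_4$ vertices, using the odd parity of that part'' strategy that the paper packages via the MCG on $G=F$ minus the $P_4$'s (your antipodal-opposite-color reply in a $P_4$ is an equally valid variant of the paper's distance-two-same-color reply). One minor caution: the closing ``fixed pairing'' formalization does not literally work for Alice on star components, since her reply to Bob coloring a leaf must be that star's center rather than a pre-assigned partner, but the adaptive rule you state in the body is the correct one, so nothing essential is missing.
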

\begin{proof}

Suppose the condition in the theorem is not met by some forest $F$. Clearly $\ell(F) < 1$ or $\ell(F)\geq 3$. If $\ell(F) < 1$, it is easy to see $\game(F) \leq 1$. If $\ell(F)\geq 3$, we consider two cases. If $\ell(F) > 3$, $P_5 \subseteq F$ and Bob can win the $2$-ECG on the $P_5$ by Lemma \ref{l:gcnp4}, which implies $\game(F) >2 $ by Lemma  \ref{l:ecg}. If $\ell(F) = 3$ and there exists some $P_4^+ \subseteq F$, Bob can win the $2$-ECG on the $P_4^+$ by Lemma \ref{l:gcnp3}, showing $\game(F) >2 $ by Lemma  \ref{l:ecg}. Otherwise if $\ell(F) = 3$ and there does not exist a $P_4^+ \subseteq F$, we conclude $|V(F)|$ is even and some component of $F$ is a $P_4$. Since $|V(F)|$ if even in this case, Bob can guarantee that he will win the game or that Alice will be the first to play in the $P_4$. If Alice is the first to play in the $P_4$, Bob can then color a second carefully chosen vertex in the $P_4$ to make the $P_4$ uncolorable.

If a forest $F$ satisfies the condition of the theorem, the initial trunks in $\mathcal{R}(F)$ are the components of $F$. Each trunk $F'$ is either a $P_4$ or has $\ell(F') \leq 2$. It is trivial to show Alice can win the $2$-MCG on any trunk that is not a $P_4$.  Thus Alice can win the 2-MCG on the subgraph $G\subset F$ obtained by deleting all copies of $P_4$ from $F$.  
If $G=F$, then Alice can win the 2-coloring game on $F$ by Lemma \ref{l:mcg1} and Lemma \ref{l:mcg2}.  Otherwise, there is at least one $P_4$ in $F$.  In this case, when playing the 2-coloring game on $F$, Alice will follow her 2-MCG strategy on $G$ for her first turn and every turn after Bob colors a vertex in $V(G)$.  But whenever Bob colors a vertex in a $P_4$ Alice will immediately color a vertex at distance 2 using the same color.  Because $|V(G)|$ is odd, Bob cannot force Alice to play first in a $P_4$.  Thus Alice will win the 2-coloring game on $F$ with this strategy.
\end{proof}

\section{Minimal Order Forest with Game Chromatic Number 4}

While Alice can always win the 4-coloring game on every forest $F$, we have shown that 4 colors are not always necessary.  So we now wish to find a forest with the least number of vertices that requires 4 colors for Alice to be victorious.  We begin by proving a number of lemmas.

\begin{lemma} \label{l:size}
If $T$ is a tree with $|V(T)| \leq 13$, there exists a vertex $v \in V(T)$ such that every component of $T-v$ has at most 6 vertices.
\end{lemma}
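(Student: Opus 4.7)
The plan is to invoke the classical centroid theorem for trees: for any tree $T$ on $n$ vertices, there is a vertex $v \in V(T)$ such that every component of $T-v$ has at most $\lfloor n/2 \rfloor$ vertices. Since $\lfloor 13/2 \rfloor = 6$, this immediately implies the lemma for any $T$ with $|V(T)| \leq 13$.

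To prove the centroid bound from scratch (in case the authors want a self-contained argument), I would proceed by an exchange/minimality argument. Choose $v \in V(T)$ so that the maximum size among the components of $T-v$ is as small as possible, and let $M(v)$ denote this maximum size. Suppose for contradiction that $M(v) > n/2$, and let $C$ be a component of $T-v$ with $|V(C)| = M(v) > n/2$. Let $u$ be the unique neighbor of $v$ lying in $C$, and consider $T-u$ instead. The components of $T-u$ are: one component containing $v$, whose vertex set is $(V(T) \setminus V(C)) \cup \{v\}$, and therefore has size $n - |V(C)| < n/2 < M(v)$; together with the components of $C - u$, each of which is a proper subgraph of $C$ and thus has at most $|V(C)|-1 < M(v)$ vertices. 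Hence $M(u) < M(v)$, contradicting the choice of $v$.

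Substituting $n \leq 13$ gives $M(v) \leq 6$, which is exactly the statement to be proved. The main obstacle is merely choosing whether to cite the centroid theorem or reprove it; there is no combinatorial subtlety beyond the standard minimality argument above, and the bound $\lfloor n/2 \rfloor \leq 6$ is tight precisely at $n=12,13$, which is why the threshold in the lemma is $13$.
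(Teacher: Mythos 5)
Your proposal is correct and uses essentially the same argument as the paper: the authors also choose $v$ minimizing the size of the largest component of $T-v$ and, assuming that component $X$ has more than $6$ vertices, pass to the neighbor $v'$ of $v$ inside $X$ to show the maximum component size strictly decreases, a contradiction. The only difference is cosmetic—you phrase it as the general centroid bound $\lfloor n/2\rfloor$ and then substitute $n\leq 13$, while the paper runs the same exchange argument directly with the constants $6$ and $13$.
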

\begin{proof}
Choose $v$ such that the order of the largest component in $T-v$ is minimized.  Let $X$ be the set of vertices in a maximal order component of $T-v$ and let $\overline{X} = V(T) \setminus X$. Suppose $|X|>6$. Then $| \overline{X}| \leq 13-7 =6$. Let $v'$ be the neighbor of $v$ in $X$; note that in $T-v'$, there are no edges between vertices in $X$ and $ \overline{X}$. Furthermore, there are at most 6 vertices in $V(T-v') \cap \overline{X}$ and at most $|X|-1$ vertices in $V(T-v')\cap X$. Thus the order of the largest component in $T-v'$ is $\max\{6,|X|-1\}$, which is less than $|X|$.  This contradicts our choice of $v$, so we conclude that $|X| \leq 6$.
\end{proof}

\begin{lemma}\label{l:d3}
If $T$ is a tree on at most 7 vertices, $T$ has at most 2 vertices of degree greater than 2.
\end{lemma}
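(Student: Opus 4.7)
The plan is to proceed by contradiction using the handshake lemma (the degree-sum formula). Suppose $T$ is a tree with $n := |V(T)| \le 7$, and suppose for contradiction that $T$ contains three distinct vertices $u_1, u_2, u_3$ each of degree at least $3$.

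Since $T$ is a tree, $\sum_{v \in V(T)} \deg(v) = 2|E(T)| = 2(n-1)$. On the other hand, every vertex in a tree on at least two vertices has degree at least $1$ (since $T$ is connected), so the three designated vertices contribute at least $9$ to the degree sum while each of the remaining $n-3$ vertices contributes at least $1$. Hence
\[
2(n-1) \;=\; \sum_{v \in V(T)} \deg(v) \;\ge\; 3 \cdot 3 + (n-3) \cdot 1 \;=\; n+6,
\]
which rearranges to $n \ge 8$, contradicting the hypothesis $n \le 7$.

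I expect no real obstacle here; the entire argument rests on comparing $2(n-1)$ with $n+6$. The only minor subtlety is making sure the bound on the remaining $n-3$ vertices is valid, which it is because $T$ is connected and therefore has no vertices of degree $0$ (the case $n=1$ is trivial, as there are zero vertices of degree greater than $2$). One could alternatively phrase the argument by noting that in any tree the number of leaves equals $2 + \sum_{v : \deg(v) \ge 2}(\deg(v)-2)$; three vertices of degree $\ge 3$ would force at least $5$ leaves, giving $n \ge 5 + 3 = 8$. Either formulation yields the desired bound directly.
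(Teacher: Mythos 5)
Your argument is correct. You bound the global degree sum: with three vertices of degree at least $3$ and every other vertex of degree at least $1$ (valid by connectivity, with the trivial $n=1$ case noted), you get $2(n-1)\ge 9+(n-3)$, hence $n\ge 8$, contradicting $n\le 7$; the arithmetic checks out and the bound is tight, as a tree on $8$ vertices with three degree-$3$ vertices exists. The paper reaches the same numerical threshold by a slightly different piece of bookkeeping: it lets $S$ be the set of vertices of degree greater than $2$, observes that the subgraph induced on $S$ is acyclic and so has at most $|S|-1$ edges, and counts edges incident to $S$ to get $|E(T)|\ge 3|S|-(|S|-1)=2|S|+1$, whence $7\ge |V(T)|\ge 2|S|+2$ and $|S|\le 2$. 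So the paper's key fact is acyclicity within $S$, while yours is the handshake lemma plus minimum degree $1$ on the remaining vertices; both are equally elementary, and in fact your computation generalizes to the same inequality $|V(T)|\ge 2|S|+2$ if you keep $|S|$ symbolic rather than fixing it at $3$, so the two proofs buy exactly the same conclusion. Your alternative phrasing via the leaf count $L=2+\sum_{\deg(v)\ge 2}(\deg(v)-2)$ is also valid and gives the same bound.
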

\begin{proof}
Let $S$ be the set of vertices of degree greater than 2.  Observe that there can be at most $|S|-1$ edges between vertices in $S$. We now generate a lower bound for the number of edges in $T$ by counting the number of edges incident to vertices in $S$ and then subtracting the maximum number of edges that could have been counted twice: $$|E(T)| \geq 3|S|-(|S|-1) = 2|S|+1.$$ Thus $7 \geq |V(T)| = |E(T)|+1 \geq 2|S|+2$, which implies that $|S|$ is at most 2.
\end{proof}

\begin{lemma}\label{l:d4}
If $T$ is a tree on at most 7 vertices, $T$ has at most one vertex of degree greater than 3.
\end{lemma}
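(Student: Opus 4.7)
The plan is to mirror the counting argument used in the proof of Lemma \ref{l:d3}, but now pushed one degree threshold higher. Let $S$ denote the set of vertices of $T$ with degree greater than $3$, so every vertex in $S$ has degree at least $4$. The goal is to combine a lower bound on $|E(T)|$ coming from the large degrees at $S$ with the upper bound $|E(T)| = |V(T)| - 1 \leq 6$, and show that this forces $|S| \leq 1$.

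The core of the argument is the edge count. Summing degrees over $S$ yields at least $4|S|$. Since $T$ is a tree, the subgraph induced by $S$ is a forest and so contains at most $|S|-1$ edges; these are precisely the edges that were double-counted in the degree sum over $S$. Subtracting them, the number of distinct edges of $T$ incident to $S$ is at least $4|S| - (|S|-1) = 3|S| + 1$, and so $|E(T)| \geq 3|S| + 1$.

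Plugging this into $|V(T)| = |E(T)| + 1 \leq 7$ gives $3|S| + 2 \leq 7$, hence $|S| \leq 5/3$, which forces $|S| \leq 1$ as desired. I do not anticipate any real obstacle: the structural input (that edges inside $S$ form a forest) is immediate from $T$ being a tree, and the arithmetic works out tightly, exactly paralleling the previous lemma with the constants $3$ and $2$ replaced by $4$ and $3$.
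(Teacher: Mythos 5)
Your proof is correct and uses essentially the same edge-counting idea as the paper: high-degree vertices force at least $|V(T)|-1$ to exceed $6$. The only cosmetic difference is that the paper argues by contradiction with exactly two vertices of degree at least $4$ (giving $|E(T)| \geq 4\cdot 2 - 1 = 7$, hence $|V(T)| \geq 8$), whereas you prove the general bound $|E(T)| \geq 3|S|+1$ in the style of the preceding lemma; both yield the result immediately.
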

\begin{proof}
Assume otherwise. Then there are at least two vertices of degree at least 4 or more and there can be at most one edge between these two vertices. Therefore, $$|V(T)| = |E(T)|+1 \geq (4 \cdot 2-1)+1=8,$$ yielding a contradiction.
\end{proof}

\begin{lemma}\label{l:trunks}
If $R$ is a trunk with at most one colored vertex and order at most 7, Alice can win the 3-MCG on $R$.
\end{lemma}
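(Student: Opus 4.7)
The plan is to describe Alice's winning strategy via case analysis on the structure of $R$. The central structural facts are Lemmas~\ref{l:d3} and~\ref{l:d4}: $R$ has at most two vertices of degree greater than $2$, and at most one of degree greater than $3$. Call the vertices of degree at least $3$ in $R$ its \emph{hubs}. The crucial observation is that only a hub can become uncolorable in the $3$-MCG, since an uncolorable vertex must have three colored neighbors in three distinct colors, which is impossible for a vertex of degree at most $2$. Thus Alice's task reduces to coloring (or otherwise neutralizing) the hubs before Bob can arrange such a triple.

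I would split into three cases by the number of hubs. If $R$ has no hub, then $R$ is a path and Alice wins effortlessly, since every uncolored vertex always has at most two colored neighbors. If $R$ has a single hub $h$, Alice's first move is to color $h$ (unless Bob has already colored $h$, in which case Alice plays any vertex); at that moment $h$ has at most two colored neighbors --- at most one from the initial coloring of $R$ and at most one from Bob's first move --- so a legal color for $h$ exists. Once $h$ is colored, every remaining uncolored vertex has degree at most $2$ in $R$, reducing to the no-hub case.

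The substantive case is when $R$ has two hubs $h_1$ and $h_2$. Here Alice's first response depends on the initial coloring and on Bob's first move: sometimes she colors a hub directly, and sometimes she colors a hub's neighbor with a carefully chosen color. In particular, if the initial colored leaf is adjacent to some hub and Bob's first move threatens to let the opposite hub be ``squeezed'' from three sides with three distinct colors, Alice can preemptively color a neighbor of the first hub with a color matching the initial, limiting Bob's subsequent color options. The main obstacle is exactly this two-hub case: Bob may try to engineer a distinct-color triple around one hub while Alice is securing the other, so Alice must coordinate her first few moves (including color choices) accordingly. The bound $|V(R)| \le 7$ together with the degree restrictions from Lemmas~\ref{l:d3} and~\ref{l:d4} leaves only a handful of structures for $R$ --- two hubs of degrees $(3,3)$, $(3,4)$, or $(4,3)$ connected by a short path with a few additional leaves --- so this reduces to a manageable finite case analysis over Bob's first move, his color choice, and the location of the initial colored leaf.
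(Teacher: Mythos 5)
Your overall skeleton is the same as the paper's: by Lemmas~\ref{l:d3} and~\ref{l:d4} only vertices of degree at least $3$ (your ``hubs,'' the paper's ``dangerous'' vertices) can ever become uncolorable with $3$ colors, there are at most two of them, at most one has degree exceeding $3$, and the zero-hub and one-hub cases are routine. The problem is that you stop exactly where the lemma's content begins: the two-hub case is not proved, only declared to be ``a manageable finite case analysis.'' That is a genuine gap, and it is not as routine as you suggest, because it is not enough to specify Alice's first response to Bob's first move --- the game continues for several more turns, Bob may pass, and you must exhibit an invariant that Alice can maintain to the end. The paper does this by defining a position to be winning when no dangerous vertex remains or the unique remaining dangerous vertex has at most one colored neighbor, and then checking that after Bob's first move Alice can always restore this invariant; without some such precisely stated invariant your case analysis has nothing to close the induction on.

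Moreover, the one genuinely nontrivial configuration is only gestured at in your sketch. It is the situation in which, after Bob's first move, both hubs are dangerous, each has exactly one colored neighbor, and the two hubs are adjacent (so Alice cannot simply color one hub without leaving the other exposed). Here the paper uses Lemma~\ref{l:d4} to conclude that one of the two, say $x$, has degree exactly $3$, and Alice colors the third neighbor $u$ of $x$ (uncolored, and, by acyclicity, not adjacent to the other hub $y$ nor to either colored vertex) with the \emph{same} color as $x$'s already-colored neighbor; this leaves $x$ with only one uncolored neighbor and two legal colors, hence permanently safe, while $y$ still has only one colored neighbor. Your remark about coloring ``a neighbor of the first hub with a color matching the initial'' points in this direction, but you never identify when this move is needed, why such a vertex $u$ exists and is unconstrained (this uses the tree structure), or why the move suffices for the rest of the game. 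As written, the proposal establishes the easy cases and correctly isolates the hard one, but does not prove it.
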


\begin{proof}
We refer to a vertex $v$ as \emph{dangerous} if $v$ has at least as many uncolored neighbors as legal colors available for it. Alice can easily win once no dangerous vertices remain or when the graph has a single dangerous vertex with at most one colored neighbor. If there are fewer than 2 dangerous vertices after Bob's first turn, Alice colors any remaining dangerous vertex in her next move to win. By Lemma \ref{l:d3} there can be at most two dangerous vertices in $R$, since a dangerous vertex must have degree at least 3. We now consider the remaining cases where after Bob's first turn there are two dangerous vertices:

\emph{Case 1: Some dangerous vertex $v$ has no colored neighbors after Bob's first turn}. Alice colors the other dangerous vertex with any legal color to reach a winning position.

\emph{Case 2: Some dangerous vertex $v$ has two colored neighbors after Bob's first turn}. Alice colors $v$ with any available color. Since the three colored vertices now lie on a $P_3$, the other dangerous vertex $v'$ can have at most one colored neighbor and Alice has reached a winning position.

\emph{Case 3: Each dangerous vertex has exactly one colored neighbor after Bob's first turn}. By Lemma \ref{l:d4}, we conclude that $d(x)=3$ for one of the dangerous vertices $x$; we refer to the second dangerous vertex as $y$. If $x$ is not adjacent to $y$, Alice colors $x$ with any available color to win. Otherwise, there must be some neighbor $u$ of $x$ that is uncolored, not adjacent to $y$, and not adjacent to any previously colored vertex. Alice colors $u$ with the same color as the previously colored neighbor of $x$. At this point $x$ is no longer dangerous and $y$ has one colored neighbor, so Alice has reached a winning positon.
\end{proof}

\begin{theorem}\label{t:u13}
Let $F$ be a forest such that $|V(F)| \leq 13$. Then $\game(F)\leq 3$.
\end{theorem}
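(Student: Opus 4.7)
The plan is to have Alice win the 3-coloring game by making a carefully chosen opening move and then reducing to the MCG machinery. Specifically, I would arrange that after Alice's first move every trunk in $\mathcal{R}(F)$ has order at most $7$ and contains at most one colored vertex, so that Lemma \ref{l:trunks} gives Alice a winning strategy in the 3-MCG on each trunk; Lemma \ref{l:mcg2} then lifts this to a winning strategy in the 3-MCG on $\mathcal{R}(F)$, and Lemma \ref{l:mcg1} converts it to a winning continuation of the 3-coloring game on $F$. Together with Alice's opening move this constitutes a complete winning strategy.

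I would split into two cases depending on the sizes of the tree components of $F$. If every component of $F$ has order at most $7$, then Alice's first move can be arbitrary: whichever vertex $u$ in some tree $T$ she colors, the resulting trunks are either an untouched component (uncolored, order at most $7$) or consist of $u$ together with a component of $T-u$ (hence order at most $|V(T)| \leq 7$ with only $u$ colored), so Lemma \ref{l:trunks} applies to each. Otherwise some tree $T$ satisfies $8 \leq |V(T)| \leq 13$; because $|V(F)| \leq 13$, this $T$ is unique and the remaining tree components have total order at most $5$. In this case I would have Alice open by coloring the vertex $v \in V(T)$ supplied by Lemma \ref{l:size}, so that every component of $T - v$ has at most $6$ vertices. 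Each trunk containing $v$ then has at most $6+1 = 7$ vertices with $v$ as its unique colored vertex, and each trunk outside $T$ is an uncolored component of order at most $5$, so Lemma \ref{l:trunks} applies uniformly.

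The main obstacle, such as it is, is exactly the bookkeeping that guarantees Alice's opening move reduces every trunk to one of order at most $7$ with at most one colored vertex, and this is precisely what Lemma \ref{l:size} was designed to supply. Once that is in place, the remainder of the argument is a direct invocation of Lemmas \ref{l:trunks}, \ref{l:mcg2}, and \ref{l:mcg1}, with the convention that Bob moves second in the original coloring game aligning with the MCG convention that Bob moves first.
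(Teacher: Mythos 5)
Your proposal is correct and follows essentially the same route as the paper: Alice opens by coloring the vertex supplied by Lemma \ref{l:size} (applied to a largest component), after which every trunk has order at most $7$ with at most one colored vertex, and Lemmas \ref{l:trunks}, \ref{l:mcg2}, and \ref{l:mcg1} finish the argument. Your case split on whether all components have order at most $7$ is harmless but unnecessary, since applying Lemma \ref{l:size} to a largest component covers both cases at once, exactly as the paper does.
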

\begin{proof}
Let $T$ be a largest order component $F$. By Lemma \ref{l:size} there exists a vertex $v$ such that every component of $T-v$ has size at most 6.  Alice colors $v$, at which point every trunk in $\mathcal{R}(F)$ has order at most 7 and at most one colored vertex. By Lemma \ref{l:trunks}, Alice can win the 3-MCG on every trunk of $\mathcal{R}(F)$. Therefore, Alice will win the 3-coloring game on $F$ by Lemma \ref{l:mcg2}.
\end{proof}

\begin{lemma}[]\label{l:pcl2}
Let $T$ be the partially colored tree shown in Figure \ref{F:Pa3} that may have an additional leaf $v$ not shown in the figure colored $\beta$. Then Bob can win the $3$-ECG on $T$.
\end{lemma}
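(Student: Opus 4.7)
The plan is to analyze the 3-ECG on $T$ by casing on Alice's first move. Alice has three kinds of options: color some uncolored vertex of $T$ with a legal color, add a colored leaf, or pass. In each case Bob's task is either (i) to make a move that immediately creates an uncolored vertex with no legal color, or (ii) to reach a position in which there are two disjoint winning moves on Alice's turn, so that by the observation at the end of Section 3 she cannot block both of them and Bob wins on his next turn.

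The first step is to identify the critical vertices in the partially colored tree of Figure \ref{F:Pa3}: uncolored vertices whose legal color set is already pinned to a single color by the given coloring, or would be pinned after a single additional move. Recall that in the 3-coloring game an uncolored vertex with two colored neighbors of distinct colors has exactly one legal color remaining, so giving it a third colored neighbor of that color renders it uncolorable. Bob's strategy revolves around locating such a target $t$ together with a nearby vertex $w$ that Bob can color with the unique legal color of $t$; if Alice cannot simultaneously block both $t$ and $w$, Bob wins.

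With these targets identified, I would handle Alice's possible first moves one at a time. In the passing case, Bob plays a designated central uncolored vertex and shows that the resulting position exhibits two winning moves whose target vertices lie at distance greater than 2, invoking the disjoint-winning-moves criterion. In the case where Alice colors an existing vertex or adds a colored leaf with some color $\alpha$, Bob responds by coloring a suitable vertex at distance 2 from Alice's action with a color different from $\alpha$, making the intermediate vertex uncolorable; this will require a short sub-case analysis according to where Alice played and which color she used, exploiting the fact that the particular partial coloring in Figure \ref{F:Pa3} is rigid enough to always supply such a Bob response.

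The main obstacle is bookkeeping the optional leaf $v$ colored $\beta$. I would first dispatch every sub-case under the assumption that $v$ is absent, and then revisit each sub-case to confirm that Bob's response still succeeds when $v$ is present, or else produce a small modification. The delicate sub-cases are those in which Alice's first move is at or near the attachment point of $v$, since the extra $\beta$-colored neighbor changes which colors are legal there; typically this restriction only helps Bob, either by preserving the disjoint winning moves or by promoting a previously non-winning Bob response into a winning one. Once each such interaction is verified, the lemma follows.
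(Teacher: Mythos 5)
Your overall framework (case analysis on Alice's first move, winning moves, and the disjoint-winning-moves criterion) matches the paper's setup, but the two mechanisms your plan relies on provably cannot close the central cases, namely when Alice passes or plays away from the two adjacent high-degree vertices $x_1$ and $x_2$ of Figure \ref{F:Pa3}. First, with three colors a response at distance $2$ with a color different from Alice's does not make the intermediate vertex uncolorable unless that vertex already sees the third color; this happens only when Alice colors, or attaches a leaf to, $x_1$ or $x_2$, so in the cases where Alice plays in the outer region $S$ (or adds a leaf there) there is no immediate kill of the kind you describe. Second, in the pass case with no extra leaf $v$, the only precolored vertices are the two $\alpha$-leaves, one adjacent to $x_1$ and one to $x_2$; after any single move by Bob at most one vertex ($x_1$ or $x_2$) has two distinctly colored neighbors, so every winning move then available colors a neighbor of that one vertex, and any two such target vertices are within distance $2$ of each other. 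Hence two disjoint winning moves cannot be created in one move, and Alice can parry all of Bob's threats simultaneously by coloring the single threatened vertex with its unique remaining legal color.

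What actually wins these cases, and what is missing from your proposal, is a deeper forced sequence exploiting the adjacency of $x_1$ and $x_2$: Bob colors $x_1'$ with $\gamma$, so that $\beta$ becomes the only legal color for $x_1$ and coloring $x_1''$ or $x_2$ with $\beta$ are both winning moves; Alice's only defense against both is to color $x_1$ itself with $\beta$, whereupon Bob colors $x_2'$ with $\gamma$, leaving $x_2$ adjacent to colors $\alpha$, $\beta$, and $\gamma$ and hence uncolorable. Moreover, the optional $\beta$-colored leaf $v$ is not mere bookkeeping that ``only helps Bob'': the paper must branch on whether $\beta$ is still legal for $x_2$ (if not, Bob instead colors $x_1$ with $\gamma$ and kills $x_2$ immediately) and must treat separately the case where $v$ is attached to $x_1$ or $x_2$. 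Without this forcing tactic, the case analysis you outline does not terminate in a win for Bob, so the proposal as written has a genuine gap.
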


\begin{figure}[ht]
\leavevmode
\begin{center}
\includegraphics{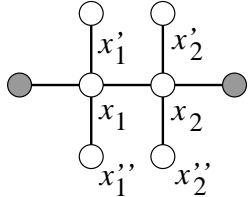}
\end{center}
\caption{A partially colored tree; shaded vertices are colored $\alpha$. The tree $T$ may have an additional leaf $v$ colored $\beta$ that is not shown.}\label{F:Pa3}
\end{figure}

\begin{proof}
If the additional leaf $v$ is adjacent to $x_1$ or $x_2$, it is not difficult to verify Bob can win on his next turn. Otherwise, we assume that $v$ does not exist or is adjacent to either $x_1'$, $x_1''$, $x_2'$, or $x_2''$. 

If Alice colors $x_1$ or $x_2$, or if Alice adds a colored leaf to $x_1$ or $x_2$ that is not colored $\alpha$, Bob has a winning move. If Alice adds a colored leaf to $x_1$ or $x_2$ that is colored $\alpha$, Bob may pretend that the leaf does not exist, since it will not affect the legal colors available for any vertex. We now consider the remaining cases where Alice passes, or where the vertex colored or leaf added by Alice is in the region $S$ of the graph, as shown in Figure \ref{F:Pa4}. Also without loss of generality we assume the initial colored leaf $v$ is not present or is in one of the 3 depicted positions.

\begin{figure}[ht]
\leavevmode
\begin{center}
\includegraphics{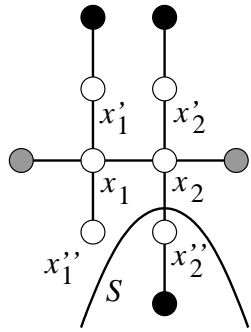}
\end{center}
\caption{Shaded vertices are colored $\alpha$. Region $S$ where Alice might have played is circled, and solid vertices denote possible location for the initial colored vertex $v$.}\label{F:Pa4}
\end{figure}

If $\beta$ is not a legal color for $x_2$, Bob can color $x_1$ with $\gamma$ to win. Otherwise, Bob colors $x_1'$ with $\gamma$. Coloring $x_1''$ or $x_2$ with $\beta$ are now winning moves, so Bob can win unless Alice responds by coloring $x_1$ with $\beta$. If Alice responds by coloring $x_1$ with $\beta$, Bob colors $x_2'$ with $\gamma$ to win.
\end{proof}

\begin{theorem} \label{l:14v}
Let $T'$ be the tree on 14 vertices shown in Figure \ref{F:min4}. Then Bob can win the 3-ECG on $T'$.  Moreover, $T'$ is a minimal example of a tree with game chromatic number 4.
\end{theorem}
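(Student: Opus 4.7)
The plan is to split the theorem into its two parts. Minimality is essentially immediate from Theorem \ref{t:u13}: since every forest on at most 13 vertices has game chromatic number at most 3, no tree with fewer than 14 vertices can have game chromatic number 4, so it remains only to verify that $\chi_g(T') = 4$. Combined with Theorem \ref{t:mcg1}, this reduces to showing $\chi_g(T') \geq 4$, which by Lemma \ref{l:ecg} follows once we show Bob can win the 3-ECG on $T'$.

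For Bob's strategy, the idea is to reduce to Lemma \ref{l:pcl2}. The tree $T'$ is evidently designed with a symmetric structure built around two copies of the ``$x_1, x_1', x_1''$ / $x_2, x_2', x_2''$'' configuration that appears in that lemma, so that a single $\alpha$-colored move by Bob, coupled with Alice's arbitrary first move, produces an instance of the partially colored tree from Lemma \ref{l:pcl2}, possibly with one extra leaf colored $\beta$ coming from Alice's play. After Bob completes this reduction, Alice is to move on precisely the configuration covered by Lemma \ref{l:pcl2}, so Bob can follow that winning strategy to the end.

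The bulk of the work is the case analysis on Alice's first move, which is the main obstacle. I would partition the possibilities as follows: (i) Alice passes; (ii) Alice colors a vertex of $T'$; (iii) Alice adds a colored leaf to some vertex of $T'$. In every case Bob responds by coloring a carefully chosen central vertex with color $\alpha$, using the symmetry of $T'$ to place his move in the ``half'' of $T'$ opposite to Alice's move so that what remains is, up to relabeling colors and up to the allowed extra $\beta$-leaf, exactly the tree of Lemma \ref{l:pcl2}. The freedom to permute the three colors $\alpha, \beta, \gamma$ (with $\gamma$ denoting the third color Alice might use) is essential: Bob fixes the roles of $\alpha$ and $\beta$ only after seeing Alice's move, so that Alice's color either matches $\alpha$ (and is absorbed into the shaded set of Lemma \ref{l:pcl2}), matches $\beta$ (and plays the role of the optional extra leaf), or is $\gamma$.

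The delicate subcases are those in which Alice's move lies close to the central axis of $T'$ or attaches near $x_1$ or $x_2$: here the reduction to Lemma \ref{l:pcl2} may fail because the extra leaf would be adjacent to $x_1$ or $x_2$ rather than to $x_1', x_1'', x_2',$ or $x_2''$. In those subcases I would instead verify directly that Bob has an immediate winning move, arguing as in the proofs of Lemmas \ref{l:gcnp4}, \ref{l:gcnp3}, and \ref{l:pcl2} that a suitably chosen vertex at distance two from Alice's colored vertex/leaf, given a different color, creates two disjoint winning moves or an uncolorable vertex. A short check confirms that each such subcase of Alice's first move either is handled by the reduction to Lemma \ref{l:pcl2} or admits such an immediate win, completing the proof.
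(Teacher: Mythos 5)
Your overall skeleton matches the paper's: minimality is exactly the paper's appeal to Theorem \ref{t:u13}, and the lower bound is obtained, as in the paper, by having Bob win the 3-ECG via a reduction to the partially colored configuration of Lemma \ref{l:pcl2}. However, there is a genuine gap in the central claim that ``a single $\alpha$-colored move by Bob, coupled with Alice's arbitrary first move, produces an instance of the partially colored tree from Lemma \ref{l:pcl2}.'' The configuration of that lemma contains \emph{two} vertices colored $\alpha$ (one adjacent to each of $x_1$ and $x_2$); after one move by each player there are at most two colored vertices, so the reduction in one round is possible only when Alice's own move can be absorbed as one of the two shaded $\alpha$-vertices. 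That is exactly what the paper's recipe achieves when Alice colors a vertex of $T'$: Bob copies her color at distance three, so her vertex and his are the two $\alpha$-vertices. Your alternative, in which Alice's move instead plays the role of the optional $\beta$-leaf, cannot produce the lemma's position after Bob's single move, since then only one $\alpha$-vertex exists. The same objection is fatal in your case (i): if Alice passes without adding a leaf, the board has a single colored vertex after Bob's reply, which is not the position of Lemma \ref{l:pcl2} under any relabeling.

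Your fallback --- that in the delicate subcases Bob has an ``immediate winning move'' at distance two from Alice's vertex --- does not repair this. In the pass case there is no immediate winning move at all: with at most one colored vertex present, no single coloring can make a vertex uncolorable in the 3-color game. Likewise, when Alice's first ECG move is to add a colored leaf at distance two from a hub (adjacent to $x_1'$, $x_1''$, $x_2'$, or $x_2''$), Bob cannot win immediately and cannot reach the two-$\alpha$ configuration in one move; he must instead play a first vertex and then, depending on Alice's reply, either win by a threat or complete the Lemma \ref{l:pcl2} position with the added leaf serving as the optional $\beta$-leaf $v$. This residual multi-move case analysis is precisely what the paper's proof defers to (``one can easily verify by considering cases''), and it is the part your proposal neither carries out nor correctly reduces away; as written, the argument is incomplete for cases (i) and (iii) and for any colored-vertex case you route through the $\beta$-leaf role rather than through color-copying at distance three.
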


\begin{proof}
If Alice colors any vertex in $T'$, Bob colors a vertex at distance three away with the same color to reach a winning position by Lemma \ref{l:pcl2}. Likewise, Bob can immediately reach a winning position if Alice adds a colored leaf adjacent to $x_1,x_2,x_3,$ or $x_4$. Otherwise, without loss of generality we assume that the colored leaf is a distance two away from $x_1$ or $x_2$ or that no colored leaf exists.  One can easily verify by considering cases and using Lemma \ref{l:pcl2} that Bob can win.

To see that $T'$ is a minimal example of a tree with maximum game chromatic number, we simply note that by Theorem \ref{t:u13}, any forest with fewer vertices has game chromatic number at most 3.
\end{proof}

\begin{figure}[ht]
\leavevmode
\begin{center}
\includegraphics{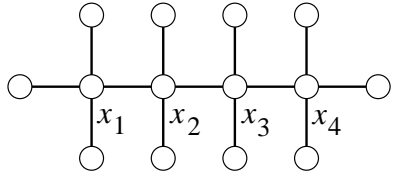}
\end{center}
\caption{The graph $T'$.}\label{F:min4}
\end{figure}

In fact, by considering different cases of maximum degree and diameter, it can be shown that $T'$ is the unique forest on 14 vertices with game chromatic number 4.  In Section 8 we will consider an example of a tree $T$ with $\Delta(T)=3$ and $\game(T)=4$.  Hence, the existence of the tree in Figure \ref{F:Pa3} as a subgraph is not necessary to force the maximum game chromatic number in a tree.

\section{Forests with Game Chromatic Number 3 and the $k$-Reduced Coloring Game ($k$-RCG)}

In this section we provide a class of forests with game chromatic number at most 3.  We prove this by showing that Alice can win the 3-coloring game on any forest in the class. In the next section we show that if $F$ is not a member of this class, has no vertices of degree 3, and $\game(F)\ge 3$, then $\game(F)=4$. Combined with the necessary and sufficient conditions for forests with game chromatic numbers 0, 1, and 2, this completely classifies forests without vertices of degree 3.

For a forest $F$, it is useful to consider the types of edges in the graph. We let $E_{>2}(F) := \{xy \in E(F): d(x)>2 \text{ or } d(y) > 2 \}$ and $E_{>>2}(F) := \{xy \in E(F): d(x)>2 \text{ and } d(y) > 2 \}$. Thus $E_{>>2}(F) \subseteq E_{>2}(F)$. If $F$ is partially colored, we define the \emph{Reduced Graph of $F$}, denoted $\mathcal{R}'(F)$, to be $\mathcal{R}(F)-\{xy: xy \not \in E_{>2}(\mathcal{R}(F))  \}$. It is important to note that $\mathcal{R'}(F)$ is generated by first looking at $\mathcal{R}(F)$ and then deleting edges based upon the degrees of the vertices in $\mathcal{R}(F)$, not the degrees of the vertices in $F$. The order is significant because a colored vertex may have high degree in $F$ but will have degree at most 1 in $\mathcal{R}(F)$. Thus two colored vertices will never be adjacent in $\mathcal{R'}(F)$, and each edge in $\mathcal{R'}(F)$ has at least one endpoint of degree greater than 2 in $\mathcal{R'}(F)$.

The \emph{$k$-Reduced Coloring Game ($k$-RCG)} is the same as the $k$-coloring game except:
\begin{enumerate}
\renewcommand{\labelenumi}{$\bullet$}
\setlength{\itemsep}{0pt}
\setlength{\parskip}{0pt}
\setlength{\parsep}{0pt}
\item{Alice can only color vertices of degree at least $k$,}
\item{Alice wins once every vertex of degree at least $k$ becomes colored,}
\item{Bob plays first, and}
\item{Bob can choose to pass.}
\end{enumerate}

\begin{lemma}[]\label{l:f'}
If Alice can win the $3$-RCG on $\mathcal{R}'(F)$ then Alice can win the 3-coloring game on $F$.
\end{lemma}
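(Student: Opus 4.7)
The plan is to convert Alice's winning strategy $\sigma$ for the 3-RCG on $\mathcal{R}'(F)$ into a winning strategy for the 3-coloring game on $F$ by having her simulate the 3-RCG in parallel with the real game. On Alice's first move, she plays the response $\sigma$ prescribes to Bob's initial pass; thereafter, each of Bob's moves in $F$ is relayed as his next move in the 3-RCG, and Alice responds in $F$ with the move $\sigma$ dictates. Once every vertex of degree at least $3$ in $\mathcal{R}'(F)$ has been colored, $\sigma$ has nothing more to say, so Alice simply colors an arbitrary uncolored vertex of $F$ with any legal color.

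The first step is to check that this translation is consistent. Bob's moves are easy: since the $\mathcal{R}'(F)$-neighborhood of any vertex is contained in its $F$-neighborhood, every color legal for Bob's target in $F$ is also legal for it in $\mathcal{R}'(F)$, so Bob's $F$-moves are always legal 3-RCG moves. Alice's moves require more care. Because $\mathcal{R}'(F)$ is obtained from $\mathcal{R}(F)$ by deleting only edges whose endpoints both have degree at most $2$ in $\mathcal{R}(F)$, every edge incident to a vertex of degree at least $3$ in $\mathcal{R}(F)$ is retained; hence such a vertex $v$ has the same neighborhood in $\mathcal{R}'(F)$ as in $\mathcal{R}(F)$, which for uncolored $v$ coincides with $N_F(v)$ (all $F$-neighbors of $v$ lying in the single trunk $R_v$). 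So every move $\sigma$ makes---always on a vertex of degree at least $3$ in $\mathcal{R}'(F)$---is legal in $F$ as well.

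Finally, I would verify that no vertex of $F$ ever becomes uncolorable. Any $v$ with $d_F(v) \le 2$ has at most two forbidden colors, leaving at least one of the three colors legal, so the only candidates for becoming uncolorable are the uncolored vertices with $d_F(v) \ge 3$, which are precisely the vertices of degree at least $3$ in $\mathcal{R}'(F)$. Since $\sigma$ is a winning 3-RCG strategy, none of these ever lose all three legal colors and each is eventually colored by Alice's simulated play. Combined with the preceding paragraph this shows Alice always has a legal move available and the game terminates with every vertex of $F$ colored, giving Alice the win. The only (mild) obstacle is the bookkeeping that identifies the ``high-degree'' vertices of the 3-RCG with exactly the vertices of $F$ that could potentially be made uncolorable; once that identification is clean, the strategy transfer requires no further ingredients.
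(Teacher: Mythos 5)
Your proposal is correct and follows essentially the same route as the paper: Alice simulates a winning 3-RCG strategy on $\mathcal{R}'(F)$ (using Bob's ability to pass/play first to sync turns), with legality transferring both ways because $E(\mathcal{R}'(F))\subseteq E(F)$ and uncolored vertices of degree at least 3 keep their full neighborhood in $\mathcal{R}'(F)$, after which she colors greedily since the remaining uncolored vertices have degree at most 2. Your write-up just makes the neighborhood identification and turn-order bookkeeping more explicit than the paper does.
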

\begin{proof}
When the 3-coloring game is played on $F$, Alice pretends the 3-RCG is being played on $\mathcal{R'}(F)$.  Any move she makes in $\mathcal{R'}(F)$ will correspond to a legal move in the 3-coloring game on $F$ since every uncolored vertex of degree at least 3 has the same set of neighbors in $\mathcal{R'}(F)$ as in $F$. Any move Bob makes in the 3-coloring game on $F$ will likewise correspond to a legal move in the 3-RCG on $\mathcal{R'}(F)$ since $E(\mathcal{R}'(F)) \subseteq E(F)$. Hence, Alice can guarantee every uncolored vertex of degree at least 3 in $\mathcal{R'}(F)$ will become colored, at which point every uncolored vertex in $F$ of degree at least 3 will be colored as well. Every remaining uncolored vertex of degree less than 3 will always have a legal color available, so Alice can now abandon her strategy and color greedily on $F$ to win.
\end{proof}

\begin{lemma}\label{l:ra1}
Let $F$ be a partially colored forest. Alice can win the 3-RCG on $\mathcal{R}'(F)$ if for every trunk $R'$ in $\mathcal{R}'(F)$:
\begin{enumerate}
\renewcommand{\labelenumi}{\roman{enumi})}
\setlength{\itemsep}{0pt}
\setlength{\parskip}{0pt}
\setlength{\parsep}{0pt}
\item $R'$ has one colored vertex and does not have an edge in  $E_{>>2}(\mathcal{R}'(F))$ OR
\item $R'$ has no colored vertices and there exists a vertex $v \in V(R')$ that covers $E_{>>2}(R')$.
\end{enumerate}
\end{lemma}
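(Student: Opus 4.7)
The plan is to first mimic the trunk-by-trunk reduction of Lemma \ref{l:mcg2}, now for the 3-RCG rather than the 3-MCG, and then to exhibit a winning strategy for Alice on each trunk $R'$ satisfying (i) or (ii) by induction on the number $n$ of uncolored degree-$\geq 3$ vertices of $R'$. The reduction has Alice pretend every trunk plays out as its own 3-RCG; she responds in whichever trunk Bob's real move falls in, and when Bob passes or plays outside a given trunk she picks any trunk with an uncolored degree-$\geq 3$ vertex to play in, treating Bob's move as a pass from that trunk's perspective. The base case $n=0$ is immediate.

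For the inductive step in case (ii), Alice's plan is to color the covering vertex $v$ on her first in-trunk move. When $E_{>>2}(R')$ is nonempty, every edge of $E_{>>2}(R')$ has $v$ as an endpoint, so $v$ has degree at least 3 and is a legal RCG target; when $E_{>>2}(R')$ is empty, no ``covering'' move is needed because the degree-$\geq 3$ vertices in $R'$ are already independent. After $v$ is colored it becomes a degree-$1$ colored leaf of each resulting sub-trunk, and since every $E_{>>2}(R')$ edge was incident to $v$, each sub-trunk now contains no edge of $E_{>>2}(\mathcal{R}'(F))$; these sub-trunks satisfy case (i) with $v$ as their unique colored vertex, so the inductive hypothesis finishes the step.

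For the inductive step in case (i), let $c$ be the unique colored vertex of $R'$. The defining property of $\mathcal{R}'(F)$ -- every edge has an endpoint of degree greater than $2$ -- together with $c$ being a colored leaf of degree $1$, forces $c$'s unique neighbor $w_0$ in $R'$ to have degree at least $3$. Alice's plan is to color $w_0$ on her first in-trunk move with a color distinct from that of $c$; this is always legal since after Bob's first move $w_0$ has at most two colored neighbors, leaving at least one of the three colors available. The resulting sub-trunks are the trivial one on $\{c,w_0\}$, which has no uncolored degree-$\geq 3$ vertex, together with one sub-trunk per branch from $w_0$; each nontrivial sub-trunk has $w_0$ as its unique colored leaf and inherits the ``no $E_{>>2}$ edge'' property from $R'$, so case (i) applies again and induction closes the step.

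The main obstacle I anticipate is Bob's first move when it falls inside the trunk: by coloring some vertex $b$ before Alice can execute her key move, Bob can leave a sub-trunk with two colored leaves (namely $b$ together with Alice's key vertex), which strictly speaking matches neither (i) nor (ii). I plan to dispose of this via a side claim that any trunk with no edge in $E_{>>2}(\mathcal{R}'(F))$ and at most two colored leaves, each adjacent to a single degree-$\geq 3$ vertex, is still winnable for Alice via a reactive priority strategy in the spirit of Lemma \ref{l:trunks}: she colors a most-threatened uncolored degree-$\geq 3$ vertex, with a tie-breaking rule that prevents Bob from exploiting a shared uncolored neighbor to simultaneously push two degree-$\geq 3$ vertices to three distinct colors. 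Inserting this auxiliary claim into the inductive step completes the proof of the lemma.
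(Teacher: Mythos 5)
Your overall plan (induct, have Alice make one key move per trunk) is in the spirit of the paper's proof, but the two load-bearing steps are not correct as stated. First, in case (ii) the plan ``color the covering vertex $v$ on Alice's first in-trunk move'' is a losing move in general once Bob has already colored a vertex $b$ of that trunk. Concretely, take an uncolored trunk consisting of a path $b,p_1,p_2,p_3,v$ where $p_1,p_3,v$ have degree $3$ (each with an extra leaf or two) and $p_2$ has degree $2$; here $E_{>>2}$ consists only of $p_3v$, so $v$ is a cover and hypothesis (ii) holds. If Bob colors $b$ and Alice answers by coloring $v$ (with any color, matched or not), Bob colors $p_2$ with a third color and creates two simultaneous threats at $p_1$ and $p_3$: each now has two distinctly colored neighbors plus an uncolored leaf neighbor, Alice can neutralize only one, and Bob makes the other uncolorable. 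The position after ``color $v$'' is simply lost, so no amount of care later can repair it; the correct reply (as in the paper's Case 3) is to color $b$'s neighbor $b'$ on $P_{b,v}$ when $d(b,v)>2$, and to color $v$ \emph{with the same color as $b$} when $d(b,v)=2$ (your proposal omits this color-matching, and without it the common neighbor of $b$ and $v$ dies immediately when it has degree $3$ or more).

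Second, the auxiliary claim you invoke to absorb the ``two colored leaves'' situation is false in the stated generality: a trunk with no $E_{>>2}$ edge and two colored leaves is not always winnable for Alice with Bob to move. If the two leaves carry distinct colors and are adjacent to one common vertex of degree at least $3$, Bob wins on his next move; and if they are adjacent to two different degree-$\geq 3$ vertices that share an uncolored degree-$2$ neighbor, Bob colors that shared neighbor with a suitable color and again has two disjoint threats. The paper avoids ever reaching such positions rather than winning from them: Alice's reply is always either the color-matched cover at distance $2$, or a vertex ($x'$ or $b'$) adjacent to the offending colored vertex all of whose other neighbors have degree at most $2$, so that when the reduced graph $\mathcal{R}'$ is recomputed after her move that vertex becomes isolated and the trunk splits, leaving every new trunk with at most one colored vertex. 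This dynamic recomputation/splitting of $\mathcal{R}'(F)$, together with the adaptive (rather than predetermined) choice of Alice's key vertex, is the essential mechanism missing from your argument; with your static trunk-by-trunk bookkeeping and fixed key move, the induction cannot be closed.
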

\begin{proof}
We proceed by induction on the number $n$ of uncolored vertices of degree at least 3 in $\mathcal{R}'(F)$ and observe Alice has won when $n=0$.  When $n \geq 1$, we consider the following outcomes of Bob's first turn:

\emph{Case 1: Bob passes}. If there exists a trunk with no colored vertices, Alice colors the covering vertex $v$ and reaches a winning position in the new reduced graph. Otherwise there must be a trunk $R'$ with one colored vertex $x$ and an uncolored neighbor $x'$ of degree at least 3. Alice colors $x'$ and the new reduced graph is winning.

\emph{Case 2: Bob colors a vertex $b$ in a trunk $R'$ that does not have an edge in $E_{>>2}(\mathcal{R}(F))$}. Observe that every trunk in the reduced graph will now meet the inductive hypothesis except for possibly one trunk that has no edges in $E_{>>2}(\mathcal{R}(F))$ and two colored vertices $x$ and $y$. If such a trunk exists, the neighbor of $x$ must be an uncolored vertex of degree at least 3; call it $x'$. Alice colors $x'$ with any available color. Because each neighbor of $x'$ had degree 1 or 2, $x'$ will be an isolated vertex in the new reduced graph, and $x$ and $y$ will no longer be in the same trunk. The new reduced graph is now winning.

\emph{Case 3: Bob colors a vertex $b$ in a uncolored trunk $R'$ and there exists some $v \in V(R')$ such that $v$ covers $E_{>>2}(R')$}. Let $d(b,v)$ denote the distance between $b$ and $v$. If $b=v$, the resulting reduced graph is winning. If $d(b,v)=1$, Alice colors $v$ and the reduced graph is winning. If $d(b,v)=2$, Alice colors $v$ with the same color as $b$. In the reduced graph, every trunk fulfills the inductive hypothesis except for one trunk that has two leaves of the same color that are both adjacent to the same vertex. However, Alice can pretend that one of these leaves is not present, which will not change the legality of any possible play. Thus, this position is equivalent to a winning position. Finally if $d(b,v)>2$, observe that the neighbor of $b$ on the $b,v$-path, $b'$, is an uncolored vertex of degree at least 3, as each edge in a reduced graph must have an endpoint of degree at least 3. Further, because $d(b',v)\geq2$, the neighbors of $b'$ must have degree 1 or 2. So Alice colors $b'$ and in the resulting reduced graph, $b'$ will be an isolated vertex and $b$ and $v$ will no longer be in the same trunk. Hence, the resulting reduced graph will be winning.
\end{proof}

\begin{theorem}\label{t:ra}
A forest $F$ has game chromatic number of at most 3 if there exists a vertex $b$ such that once $b$ becomes colored, every reduced trunk $R'$ in $\mathcal{R'}(F)$ either
\begin{enumerate}
\renewcommand{\labelenumi}{\roman{enumi})}
\setlength{\itemsep}{0pt}
\setlength{\parskip}{0pt}
\setlength{\parsep}{0pt}
\item has one colored vertex and does not have an edge in  $E_{>>2}(\mathcal{R}'(F))$ OR
\item has no colored vertices and there exists a vertex $v \in V(R')$ that covers $E_{>>2}(R')$.
\end{enumerate}
\end{theorem}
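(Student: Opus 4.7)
The plan is to exhibit an explicit winning strategy for Alice in the 3-coloring game on $F$ that uses a single opening move followed by a direct appeal to the two preceding lemmas. On her first turn, Alice simply colors $b$ with any of the three colors (every color is legal since no vertex has yet been colored). After this move, play passes to Bob, and the resulting partially colored forest $F'$ satisfies, by hypothesis, exactly the condition required in Lemma \ref{l:ra1}: every reduced trunk of $\mathcal{R}'(F')$ falls under case (i) or case (ii).

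From this state I would apply Lemma \ref{l:ra1} to conclude that Alice wins the 3-RCG on $\mathcal{R}'(F')$. A small but important bookkeeping point is that the turn order lines up correctly: the 3-RCG is defined with Bob playing first, and indeed Alice has already spent her opening move on $b$, so it really is Bob's turn when we enter the RCG framework. Lemma \ref{l:f'} then converts this 3-RCG winning strategy on $\mathcal{R}'(F')$ into a winning strategy for Alice in the 3-coloring game played from the partially colored position $F'$.

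Concatenating these pieces yields the full strategy: color $b$ first, then follow the strategy provided by Lemma \ref{l:f'} (which in turn invokes Lemma \ref{l:ra1}). This shows Alice wins the 3-coloring game on $F$, hence $\game(F) \leq 3$.

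The main thing to watch for is not really a mathematical obstacle but a parity check: the theorem's hypothesis is phrased for the position \emph{after} $b$ is colored, and it must be Bob's turn at that point so that Lemma \ref{l:ra1} applies as stated. Since Alice always moves first in the 3-coloring game, using her first move to color $b$ makes this alignment automatic, and the result follows immediately from the work already done.
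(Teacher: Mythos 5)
Your proposal is correct and matches the paper's proof, which likewise has Alice color $b$ on her first move and then invokes Lemma \ref{l:ra1} (together with Lemma \ref{l:f'}) to finish; your explicit mention of Lemma \ref{l:f'} and the turn-order check just spells out what the paper leaves implicit.
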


\begin{proof}
Alice colors $b$. Every reduced trunk is now winning by  Lemma \ref{l:ra1}.
\end{proof}

Determining if the vertex $b$ required in Theorem \ref{t:ra} exists for a uncolored forest $F$ is simple. If no trunk (i.e. component) in the uncolored graph $\mathcal{R}'(F)$ contains disjoint edges in $E_{>>2}(\mathcal{R}'(F))$, $b$ will exist. If two or more trunks in $\mathcal{R}'(F)$ each contain disjoint edges in $E_{>>2}(\mathcal{R}'(F))$, $b$ will not exist. If only one trunk contains disjoint edges in $E_{>>2}(\mathcal{R}'(F))$, $b$ must lie on the path between the edges if it is to exist. Thus, 3 pairwise disjoint edges from $E_{>>2}(\mathcal{R}'(F))$ in a trunk of $\mathcal{R}'(F)$ will leave at most one possible candidate for $b$ that must be checked.

Although Theorem \ref{t:ra} provides sufficient conditions for a forest to have game chromatic number at most 3, it is apparent that there are many forests that do not meet the conditions of Theorem \ref{t:ra} but still have game chromatic number 3. For instance, the tree of 12 vertices that can by drawn by adding on leaves to a $P_5$ until every vertex on the $P_5$ has degree 3. In the next section, we show that these conditions are both necessary and sufficient for a forest to have game chromatic number at most 3 if the forest does not have vertices of degree exactly 3.

\section{Forests with Game Chromatic Number 4}

In any partial coloring of a forest $F$ with the color set $C$, there is an implicit coloring function $c: V' \to C$, where $V'$ is the set of colored vertices in $V(F)$. We thus refer to the color of a colored vertex $v$ as $c(v)$. Furthermore, when we suppose Alice and Bob are playing the 3-coloring game on a partially colored graph, we assume that the graph has been partially colored with the same set of 3 colors to be used in the game. For any two vertices $x$ and $y$ in the same trunk, we will denote the unique $x,y$-path by $P_{x,y}$.

\begin{lemma}\label{l:esl}
Let $R$ be a partially colored trunk with two colored vertices $x$ and $y$ and possibly a third colored vertex $v$ such that:
\begin{enumerate}
\renewcommand{\labelenumi}{\roman{enumi})}
\setlength{\itemsep}{0pt}
\setlength{\parskip}{0pt}
\setlength{\parsep}{0pt}
\item $c(x) \neq c(y)$,
\item $d(x,y)$ is even, and
\item every vertex an odd distance from $x$ on $P_{x,y}$ has degree at least 4,
\end{enumerate}
\noindent then Bob can win the 3-ECG on $R$ if Alice chooses to pass on her first turn.
\end{lemma}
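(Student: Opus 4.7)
The plan is to induct on $k$ where $d(x,y)=2k$. Write $P_{x,y}=v_0,v_1,\ldots,v_{2k}$ with $c(v_0)=\alpha$ and $c(v_{2k})=\beta$, and let $\gamma$ denote the third color.

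For the base case $k=1$, $v_1$ has degree at least $4$ and two colored path-neighbors $v_0,v_2$ in distinct colors $\alpha,\beta$, so its only legal color is $\gamma$. Since $v_1$ has at least two off-path neighbors in $R$ while the optional third colored vertex $v$ has degree $1$ in $R$ and is hence adjacent to at most one of them, either $v_1$ is already uncolorable (if $v$ is an off-path neighbor of $v_1$ with $c(v)=\gamma$) or Bob colors an uncolored off-path neighbor of $v_1$ with $\gamma$; in either case $v_1$'s neighbors span all three colors and Bob wins immediately.

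For the inductive step $k\ge 2$, Bob's response to Alice's pass is to color $v_2$ with $\gamma$, which is legal in the generic case that $v$ is not an off-path neighbor of $v_2$ colored $\gamma$. This single move accomplishes two things. First, $v_1$'s colored neighbors become $v_0=\alpha$ and $v_2=\gamma$, forcing $v_1$'s legal set to the singleton $\{\beta\}$; coloring any uncolored off-path neighbor of $v_1$ with $\beta$ thus becomes a winning move, and since $\deg(v_1)\ge 4$ at least one such target is available. Alice is therefore compelled to neutralize this threat, either by coloring $v_1$ with $\beta$, or by coloring or adding a $\beta$-leaf at an off-path neighbor of $v_1$, all of which take place outside the sub-trunk $R'$ containing the uncolored region at $v_3$. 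Second, $R'$ has colored leaves $v_2=\gamma$ and $v_{2k}=\beta$ at even distance $2(k-1)$ with distinct colors, and every vertex of $P_{v_2,v_{2k}}$ at odd distance from $v_2$ still has degree at least $4$. Hence $R'$ satisfies the lemma's hypothesis with parameter $k-1$, and when it becomes Bob's turn again the state restricted to $R'$ matches an instance of the inductive hypothesis in which Alice has just passed in $R'$. Bob executes the inductive strategy on $R'$, treating any later Alice moves outside $R'$ as passes there, and thereby forces an uncolorable vertex in $R'\subseteq R$.

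The main obstacle is handling the interactions of the third colored vertex $v$ with Bob's opening move. The delicate sub-cases are those in which $v$'s position rules out the default move $v_2=\gamma$: for instance, if $v$ is adjacent to $v_2$ with $c(v)=\gamma$, Bob instead colors an off-path neighbor of $v_3$ with $\gamma$, which forces a $\{\beta,\gamma\}$ colored-neighbor set at $v_3$ and sets up a cascade in which Alice's inevitable block either makes $v_3$ uncolorable at once or colors $v_3=\alpha$, in which case Bob's follow-up $v_1=\beta$ makes $v_2$ uncolorable via its $\{\alpha,\gamma\}$-neighbors. Analogous adaptations are needed when $v$ is adjacent to an odd-distance vertex of $P_{x,y}$. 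In each such sub-case one verifies by direct analysis that Bob still either wins outright or collapses the position to an instance of the inductive hypothesis on a strictly smaller sub-trunk; the careful bookkeeping of which colored leaves each new trunk inherits after the opening exchange is the most technically involved part of the argument.
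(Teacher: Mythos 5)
Your main line of argument is the same as the paper's: induct on $d(x,y)$, have Bob answer Alice's pass by coloring $x_2$ (your $v_2$) with the third color $\gamma$, observe that the resulting threat at $v_1$ must be answered outside the far sub-trunk, and recurse on the trunk spanned by $v_2$ and $y$. That part is sound (modulo the small check that a $\beta$-threat vertex next to $v_1$ survives the possible placement of $v$, which is easy since $\deg(v_1)\ge 4$ and $v$ is a single leaf). The genuine gap is in your treatment of the one truly exceptional sub-case, namely $v$ adjacent to $v_2$ with $c(v)=\gamma$. You have Bob color an off-path neighbor of $v_3$ with $\gamma$ and claim this forces a $\{\beta,\gamma\}$ colored-neighbor set at $v_3$. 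But $v_3$ has a $\beta$-colored neighbor only when $v_4=y$, i.e.\ when $d(x,y)=4$; for $d(x,y)\ge 6$ every neighbor of $v_3$ other than Bob's new $\gamma$-leaf is uncolored, so Bob's move creates no threat at all, Alice is not compelled to answer, and the cascade you describe (Alice colors $v_3$ with $\alpha$, Bob plays $v_1=\beta$) never gets started. The remaining sub-cases you defer to unstated ``direct analysis,'' and the ones you flag about $v$ sitting at an odd-distance vertex of $P_{x,y}$ are not actually obstructions to the default move, so the case analysis as written is both incomplete and partly misdirected.

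The fix is local and is what the paper does: when $v$ is adjacent to $v_2$ and $c(v)=\gamma$ (so $c(v),c(x),c(y)$ are all distinct), Bob instead colors an \emph{off-path neighbor of $v_1$} with $\gamma$. Now $v_1$ has colored neighbors $\alpha$ and $\gamma$, so coloring $v_2$ with $\beta$, or coloring any remaining uncolored off-path neighbor of $v_1$ with $\beta$, is a winning move; since $\deg(v_1)\ge 4$ and $v$ hangs off $v_2$, at least two such threats exist, and the only move of Alice blocking all of them is to color $v_1$ with $\beta$. If she does, Bob colors $v_3$ with $\alpha$ (legal, since all neighbors of $v_3$ other than possibly $y$ are uncolored and $c(y)=\beta$), and then $v_2$ sees $\beta$, $\gamma$, and $\alpha$ and is uncolorable. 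This works uniformly for all even $d(x,y)\ge 4$, which is what your version is missing.
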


\begin{figure}[ht]
\leavevmode
\begin{center}
\includegraphics{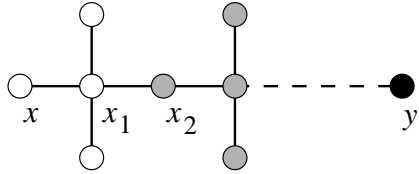}
\end{center}
\caption{Diagram showing part of $R$, which may have an additional colored leaf (not shown). Light gray represents vertices that may not exist.}\label{F:Pa6}
\end{figure}

\begin{proof}
Label the vertices as in Figure \ref{F:Pa6}. Observe that the third vertex $v$ can not lie on $P_{x,y}$ since $R$ is a trunk. Let $x_n$ be the vertex a distance $n$ from $x$ on $P_{x,y}$. If the length of $P_{x,y}$ is 2, Bob can immediately color an uncolored neighbor of $x_1$ with a color different than $c(x)$ and $c(y)$, making $x_1$ uncolorable.  If the length of $P_{x,y}$ is greater than 2, Bob can color $x_2$ with a color other than $c(x)$ or $c(y)$, unless this is prevented by $v$. This would create two new trunks, one of which will be winning by induction on Bob's next turn.  Now suppose that the color and placement of $v$ prevents coloring $x_2$ as desired. In this case $c(v),c(x),c(y)$ are all distinct colors.  Then Bob will use $c(v)$ to color an unlabelled neighbor of $x_1$; that is, a neighbor which isn't $x_2$.  Coloring $x_2$ or another unlabelled neighbor of $x_1$ with $c(y)$ are now winning moves, so Bob can win unless Alice responds by coloring $x_1$ with $c(y)$.  If Alice responds by coloring $x_1$ with $c(y)$, Bob colors $x_3$ with $c(x)$ to win.
\end{proof}

\begin{theorem}\label{ug1}
A forest $F$ that does not have vertices of degree 3 has game chromatic number 4 if coloring any vertex $b$ with $\alpha$ produces a graph where some reduced trunk $R'$ in $\mathcal{R'}(F)$ either
\begin{enumerate}
\renewcommand{\labelenumi}{\roman{enumi})}
\setlength{\itemsep}{0pt}
\setlength{\parskip}{0pt}
\setlength{\parsep}{0pt}
\item has one colored vertex and an edge in  $E_{>>2}(\mathcal{R}'(F))$ OR
\item has no colored vertices and there does not exists a vertex $v \in V(R')$ that covers $E_{>>2}(R')$.
\end{enumerate}
\end{theorem}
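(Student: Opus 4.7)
The plan is to establish $\game(F) \geq 4$ by exhibiting a winning strategy for Bob in the 3-coloring game; combined with Theorem \ref{t:mcg1} this yields $\game(F) = 4$. Alice opens by coloring some vertex $b$ with some color $\alpha$, and by the hypothesis the resulting $\mathcal{R}'(F)$ contains a bad reduced trunk $R'$ satisfying (i) or (ii). The main structural observation we use throughout is that the absence of degree-3 vertices in $F$ forces every vertex with degree at least 3 in $\mathcal{R}'(F)$ to have degree at least 4 in $F$; in particular, both endpoints of every edge in $E_{>>2}(\mathcal{R}'(F))$ have $F$-degree at least 4, which is exactly the parameter required by Lemma \ref{l:esl}.

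In case (i), $R'$ contains $b$ and a heavy edge $uv \in E_{>>2}(\mathcal{R}'(F))$. Bob responds by coloring a vertex $y$ in $R'$ with a color $\beta \neq \alpha$, choosing $y$ so that $b$ and $y$ lie in a common trunk $R$ of $F$, the distance $d(b,y)$ in $R$ is even, and every vertex at odd distance from $b$ on $P_{b,y}$ has degree at least 4 in $F$. The heavy edge $uv$, together with the no-degree-3 hypothesis (which supplies high-degree anchors along the $b,u$-path in $R'$), guarantees such a $y$ exists; typically an uncolored leaf-neighbor of $u$ or of $v$ in the underlying trunk does the job, with the choice between $u$ and $v$ used to control parity. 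With this configuration created, Lemma \ref{l:esl} handles any continuation in which Alice subsequently passes (viewed as a 3-ECG via Lemma \ref{l:ecg} on the induced trunk $R$); for each non-pass response by Alice, we argue directly that Bob either has a pair of disjoint winning moves (yielding a win on his next turn, as noted in Section 3) or can recurse on a strictly smaller sub-configuration still meeting the hypotheses of Lemma \ref{l:esl}.

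In case (ii), $R'$ has no colored vertex and no cover for $E_{>>2}(R')$, so there exist two vertex-disjoint heavy edges $e_1, e_2 \in E_{>>2}(R')$, all four endpoints of which have degree $\geq 4$ in $F$. Since $b$ does not lie in $R'$, it exerts no influence on play inside $R'$. Bob plays the ``seed'' role himself, coloring a vertex $y \in R'$ adjacent to $e_1$ with any color, thereby simulating an Alice-style opening in $R'$ and reducing to a situation analogous to case (i) centered at $y$. The second disjoint edge $e_2$ serves as a backup: any single move by Alice can neutralize threats based on at most one of $e_1, e_2$, so on his next turn Bob can always create a Lemma \ref{l:esl} configuration using whichever heavy edge remains live.

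The principal obstacle is verifying, in both cases, that Alice's non-pass responses do not foil Bob's plan. Lemma \ref{l:esl} only gives a Bob win when Alice passes, so the real work is a case analysis showing that every alternative response by Alice either yields two disjoint winning moves for Bob or shortens the relevant path so that the same argument applies with smaller parameters. The no-degree-3 hypothesis is critical at each step: the vertices on which Bob relies have at least four neighbors, leaving enough room that Alice's single defensive move cannot simultaneously block all of Bob's follow-ups; a vertex of degree exactly 3 would be too easily saturated and would allow Alice to escape.
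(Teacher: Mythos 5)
Your proposal identifies the right ingredients (the no-degree-3 hypothesis giving degree-$\geq 4$ endpoints, Lemmas \ref{l:esl} and \ref{l:pcl2} as Bob's endgame tools), but it defers exactly the step that constitutes the proof. You set up a single Lemma \ref{l:esl} configuration inside one trunk and then say that Alice's non-pass responses will be handled by a case analysis showing Bob has disjoint winning moves or can ``recurse''; that analysis is never carried out, and it is the whole difficulty, since Lemma \ref{l:esl} only gives Bob a win when Alice plays outside the trunk. The paper never has to fight Alice inside an esl-configuration because Bob's move is different in kind from yours: after choosing the heavy edge $xy$ at \emph{minimum} distance from the colored vertex, Bob colors the degree-2 vertex $x_1$ adjacent to $x$ on the connecting path. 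Since $x_1$ is a cut vertex, this splits the trunk into two trunks --- one containing the heavy edge with a colored leaf next to $x$ (a Lemma \ref{l:pcl2} threat after one more Bob move), the other a path-trunk whose two colored ends have different colors, even distance, and degree-$\geq 4$ vertices at odd distances (a Lemma \ref{l:esl} threat). Alice's single reply lies in at most one of these trunks, so in the other she has by definition ``passed'' and one lemma applies verbatim. Your move colors a leaf, which disconnects nothing, so both threats remain entangled in one trunk and neither lemma applies once Alice answers there. Moreover, the existence of your target vertex $y$ (even distance from $b$, all odd-distance interior vertices of degree at least 4) is not guaranteed for an arbitrary heavy edge: it requires choosing the heavy edge closest to $b$, which is what forces the strict alternation of degree-$\geq 4$ and degree-2 vertices along the path; toggling between the two endpoints of the heavy edge only adjusts the parity at the far end, not the degrees of interior odd-distance vertices.

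Case (ii) has further gaps. Your ``reduction to case (i)'' has a turn-order flaw: in case (i) Bob moves immediately after the lone colored vertex appears, whereas after your seed move it is Alice's turn, so she can defuse that configuration before Bob acts; the ``backup edge'' claim does not repair this without choosing $e_1,e_2$ at minimal mutual distance and verifying the two threats cannot both be answered, and you never treat the sub-case in which the two disjoint heavy edges have adjacent endpoints --- the paper needs Theorem \ref{l:14v} (the 14-vertex tree) precisely there, while the general mechanism again is Bob coloring the path vertex next to $x$ so that each of the two resulting trunks carries its own threat. Finally, the assertion that $b\notin R'$ ``exerts no influence on play inside $R'$'' is false: $b$ may be adjacent in $F$ to a degree-2 vertex of $R'$ through an edge deleted in forming $\mathcal{R}'(F)$, which is exactly why Lemmas \ref{l:pcl2} and \ref{l:esl} are stated with an allowance for one additional colored leaf and why the paper's Case 2 explicitly tracks that extra leaf.
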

\begin{proof}

\emph{Case 1:} There exists a reduced trunk $R'$ in  $\mathcal{R'}(F)$ that has one colored vertex $v$ and has an edge in  $E_{>>2}(\mathcal{R}'(F))$. Choose $xy$ to be the edge in $E_{>>2}(\mathcal{R}'(F))$ such that $d(v,x)$ is minimal. If $v$ is adjacent to $x$, Bob can win by Lemma \ref{l:pcl2}. Otherwise, observe that two adjacent vertices of degree at least 4 on $P_{v,x}$ would contradict our choice of $x$ and two adjacent vertices of degree less than 2 would contradict $x$ and $v$ being connected in $\mathcal{R}'(F)$. Thus we can conclude that every vertex an odd distance from $x$ on $P_{v,x}$ has degree at least 4 and every vertex an even distance from $x$ on $P_{v,x}$ has degree exactly 2. 

\begin{figure}[ht]
\leavevmode
\begin{center}
\includegraphics{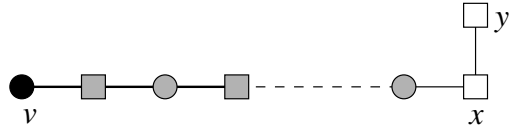}
\end{center}
\caption{Diagram showing part of the trunk $R'$. Squares denote vertices of degree at least 4 and circles denote vertices of degree exactly 2.}\label{F:Pa7}
\end{figure}

Bob now colors $x_1$, the neighbor of $x$ on $P_{v,x}$, with a color other than $c(v)$. This creates two trunks. If Alice next move is not in $R_x$, Bob can color a neighbor of $y$ and will win by Lemma \ref{l:pcl2}. Otherwise, Alice will not have colored in the other trunk $R_v$ and Bob can win by Lemma \ref{l:esl}.

\emph{Case 2:} If there exists a reduced trunk $R'$ in  $\mathcal{R'}(F)$ that has no colored vertices and there does not exists a vertex $v \in V(R')$ that covers $E_{>>2}(R')$, then $R'$ has two disjoint edges in  $E_{>>2}(\mathcal{R}'(F))$. Choose $xx'$ and $yy'$ to be disjoint edges in $E_{>>2}(\mathcal{R}'(F))$ such that $d(x,y)$ is minimal. If $d(x,y)=1$, Bob can win the 3-ECG on $F$ by Theorem \ref{l:14v} and thus the 3-coloring game by Lemma \ref{l:ecg}. If $d(x,y)>1$, observe that two adjacent vertices of degree at least 4 on $P_{x,y}$ would contradict our choice of $x$ and $y$ and two adjacent vertices of degree less than 2 would contradict $x$ and $y$ being connected in $\mathcal{R}'(F)$. Thus we can conclude that every vertex an even distance from $x$ on $P_{x,y}$ has degree at least 4 and every vertex an odd distance from $x$ on $P_{x,y}$ has degree exactly 2. 

\begin{figure}[ht]
\leavevmode
\begin{center}
\includegraphics{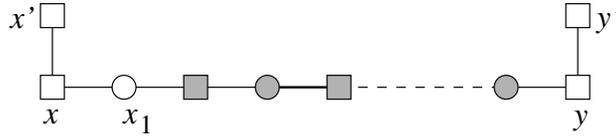}
\end{center}
\caption{Diagram showing an induced subgraph of $F$. Squares denote vertices of degree at least 4 and circles vertices of degree exactly 2.}\label{F:Pa8}
\end{figure}

Bob now colors the neighbor of $x$ on $P_{x,y}$, $x_1$, with $\beta$. This creates two new trunks, $R_x$ and $R_y$, each of which can have at most one colored leaf other than $x_1$. If Alice does not respond by coloring in $R_x$, Bob's can win on his next turn by Lemma \ref{l:pcl2}. If Alice colors in $R_x$ (and not in $R_y$), Bob can win in $R_y$ by the argument presented in Case 1.
\end{proof}

\begin{theorem}\label{f2}
A forest $F$ that does not have vertices of degree 3 has game chromatic number less than 4 if and only if there exists a vertex $a$ such that once $a$ becomes colored, every reduced trunk $R'$ in $\mathcal{R'}(F)$ either
\begin{enumerate}
\renewcommand{\labelenumi}{\roman{enumi})}
\setlength{\itemsep}{0pt}
\setlength{\parskip}{0pt}
\setlength{\parsep}{0pt}
\item has one colored vertex and does not have an edge in  $E_{>>2}(\mathcal{R}'(F))$ OR
\item has no colored vertices and there exists a vertex $v \in V(R')$ that covers $E_{>>2}(R')$.
\end{enumerate}
\end{theorem}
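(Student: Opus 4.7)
The plan is to establish this biconditional by invoking the two preceding theorems, which together cover both directions essentially as a corollary.

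For the sufficiency (``$\Leftarrow$'') direction, suppose such a vertex $a$ exists. Alice's strategy in the 3-coloring game on $F$ is simply to color $a$ with any color as her first move. By hypothesis, every reduced trunk in $\mathcal{R}'(F)$ then satisfies (i) or (ii). By Lemma \ref{l:ra1}, Alice can then win the 3-RCG on $\mathcal{R}'(F)$, and by Lemma \ref{l:f'} this yields a winning strategy for Alice in the 3-coloring game on $F$. Hence $\game(F) \leq 3 < 4$. (This direction does not use the assumption that $F$ has no vertex of degree 3 and is exactly Theorem \ref{t:ra}.)

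For the necessity (``$\Rightarrow$'') direction, we prove the contrapositive. Suppose no such vertex $a$ exists. Then for every vertex $b \in V(F)$, once $b$ becomes colored, some reduced trunk $R'$ in $\mathcal{R}'(F)$ fails both (i) and (ii). Since $F$ is initially uncolored and exactly one vertex is colored after coloring $b$, every reduced trunk contains zero or one colored vertex; so the failure of (i) and (ii) means the bad trunk either has one colored vertex together with an edge in $E_{>>2}(\mathcal{R}'(F))$, or has no colored vertices and no vertex covering $E_{>>2}(R')$. This is precisely the hypothesis of Theorem \ref{ug1}. Because $F$ has no vertex of degree 3, Theorem \ref{ug1} yields $\game(F) = 4$, so $\game(F) \not< 4$, as required.

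The only subtlety worth checking is that whether a reduced trunk satisfies (i) or (ii) depends only on \emph{which} vertex is colored, not on the specific color used. This ensures that the negation of the condition of Theorem \ref{f2} aligns exactly with the hypothesis ``coloring any vertex $b$ with $\alpha$'' in Theorem \ref{ug1}. Since the trunk decomposition, the construction of $\mathcal{R}'(F)$ (which depends only on degrees in $\mathcal{R}(F)$), and the property of covering $E_{>>2}$ are purely combinatorial and color-independent, this observation is immediate, and no further work is required.
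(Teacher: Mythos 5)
Your proposal is correct and follows essentially the same route as the paper, which proves the theorem as an immediate corollary of Theorem \ref{t:ra} (sufficiency) and Theorem \ref{ug1} (necessity, via the contrapositive). The extra detail you supply—unpacking Theorem \ref{t:ra} through Lemma \ref{l:ra1} and Lemma \ref{l:f'}, and noting that the trunk conditions depend only on which vertex is colored, not on the color—is sound but not needed beyond what the paper's two-line argument already invokes.
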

\begin{proof}
If a forest meets the condition stated in the theorem, it has game chromatic number at most 3 by Theorem  \ref{t:ra}. Otherwise, it has game chromatic number 4 by  Theorem \ref{ug1}.
\end{proof}

\section{A Tree $T$ with $\game(T)=4$ and $\Delta(T)=3$}

The motivation for this section arose from attempts to classify trees with game chromatic number 3.  While Theorem \ref{t:gcn2} completely classifies all trees and forests of game chromatic number 2, we still do not know what distinguishes trees and forests of game chromatic numbers 3 and 4.  

\begin{figure}[h]
\begin{picture}(400,130)
\put(42,36){\includegraphics[width=.38\linewidth]{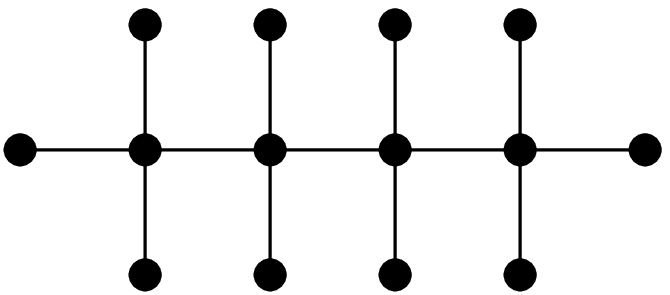}}
\put(213,7){\includegraphics[width=.28\linewidth]{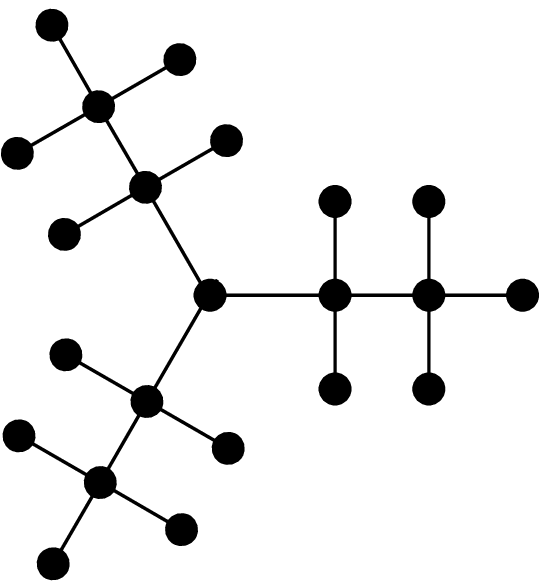}}
\end{picture}
\caption{The tree $T'$ is on the left.  The tree $T''$ is on the right.}\label{F.smallexample}
\end{figure}

\begin{figure}[h]
\leavevmode
\begin{center}
\includegraphics[width=.23\linewidth]{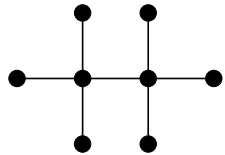}
\end{center}
\caption{The tree $H$.}\label{F.examplesubgraph}
\end{figure}

Thereom \ref{l:14v} and Lemma \ref{l:ecg} show that whenever $T'$ is a subgraph of a forest $F$ then $\game(F)=4$.  But this
is not a necessary condition.  The tree $T''$ in Figure \ref{F.smallexample} also has game chromatic number 4, but because the
center vertex has degree 3, we can see that $T'$ is not a subgraph of it.  We do, however, see
three copies of the subgraph $H$ (see Figure \ref{F.examplesubgraph})
in $T''$.  Bob exploits the subgraph $H$ as in Lemma \ref{l:pcl2} to win the 3-ECG, and thus the 3-coloring game, on $T''$.  We wanted to determine if maximum degree was a relevant characteristic for
classifying game chromatic number 3.  We conjectured that maximum degree alone is not
relevant; specifically, that there exists a tree with maximum degree 3 but game chromatic number 4.

In this section, we show that Bob can always win the 3-coloring game on a particular tree $T$.  This is sufficient to
prove that $\game(T)=4$.  When Bob is playing the 3-coloring game, we call coloring a vertex $v$ with $\alpha$ a \emph{winning move} if $v$ is uncolored, $\alpha$ is a legal move for $v$, and coloring $v$ with $\alpha$ will make some vertex uncolorable. We begin our proof that there exists a tree $T$ with maximum degree 3 but game chromatic number 4 with
a sequence of lemmas.  

\begin{lemma}\label{L:V1}
Let $T_1$ be the partially colored tree shown in Figure \ref{F.doubledanger} where $x$ can be colored with a color $\gamma$ different from both $c(u)$ and $c(v)$ and if $a$ or $b$ has a colored neighbor then it is colored $\gamma$.  If it is Bob's turn, then Bob can win the 3-coloring game.
\end{lemma}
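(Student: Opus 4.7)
The plan is to exhibit a single move by Bob that creates two disjoint winning moves, in the sense of Section 3. Since two disjoint winning moves on Alice's turn guarantee Bob a win regardless of her response, this suffices.

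The natural candidate for Bob's first move is to color $x$ with $\gamma$. This move is legal by the first hypothesis of the lemma, which ensures $\gamma$ is distinct from both $c(u)$ and $c(v)$, the colors forbidden at $x$. The point of this opening move is that it simultaneously pushes the color $\gamma$ into the neighborhood of both $a$ and $b$, matching whatever $\gamma$-colored neighbors of $a$ and $b$ are already present.

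Next, I would identify two uncolored vertices $a'$ and $b'$ in Figure \ref{F.doubledanger}, together with colors $\alpha,\beta \in \{c(u),c(v)\}$, such that coloring $a'$ with $\alpha$ makes $a$ uncolorable and coloring $b'$ with $\beta$ makes $b$ uncolorable. The hypothesis restricting the colored neighbors of $a$ and $b$ to color $\gamma$ is exactly what guarantees that these two moves are winning: no stray additional colors appear in the neighborhoods of $a$ or $b$ that could save them after one further non-$\gamma$ color is introduced. I would then confirm, by inspection of the figure, that the distance between $a'$ and $b'$ in $T_1$ exceeds $2$, which gives the disjointness required by the definition.

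The main obstacle is the structural verification against Figure \ref{F.doubledanger}, specifically checking that $x$ sits strictly between the $a$-side and $b$-side of $T_1$ with enough intervening uncolored vertices to enforce the distance condition, and that the local configurations at $a$ and $b$ leave enough uncolored neighbors for $a'$ and $b'$ to exist. Once these geometric facts are confirmed, Bob's win follows from the observation, already stated at the end of Section 3, that two disjoint winning moves on Alice's turn cannot both be defused by her single response, so Bob completes an uncolorable vertex on his following turn.
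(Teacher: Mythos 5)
Your opening move is the same as the paper's (Bob colors $x$ with $\gamma$), and the overall plan --- create two disjoint winning moves that Alice cannot both defuse --- is exactly the paper's argument. But you misidentify the winning moves, and the misidentification is not something a look at the figure would fix: it contradicts the lemma's hypothesis. You propose to find vertices $a'$, $b'$ such that coloring $a'$ makes $a$ uncolorable and coloring $b'$ makes $b$ uncolorable. Under the hypothesis, however, every colored neighbor of $a$ (and of $b$) is colored $\gamma$, and Bob's move at $x$ only adds more $\gamma$; so immediately after Bob's move the set of colors appearing in $N(a)$ is just $\{\gamma\}$, and one further move coloring a neighbor of $a$ introduces at most one additional color, leaving $a$ with a legal color. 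Hence no single move can make $a$ (or $b$) uncolorable, and the moves you describe are not winning moves in the sense of Section 3. Your reading of the hypothesis is also inverted: extra colors in the neighborhood of a vertex endanger it rather than ``save'' it, so a hypothesis restricting $N(a)$, $N(b)$ to the single color $\gamma$ cannot be what sets $a$, $b$ up as victims.

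The roles of the vertices are the other way around. In the paper's proof, $a$ and $b$ are themselves the targets of Bob's two threatened moves: after $x$ is colored $\gamma$, each of the (degree-$3$) vertices between $x$ and the $c(u)$-side, respectively the $c(v)$-side, already sees two distinct colors ($\gamma$ and $c(u)$, respectively $\gamma$ and $c(v)$), and coloring its remaining uncolored neighbor --- which is $a$, respectively $b$ --- with the third color renders it uncolorable. The hypothesis that any colored neighbor of $a$ or $b$ is colored $\gamma$ is exactly what makes these non-$\gamma$ threats \emph{legal} at $a$ and $b$; the separation of $a$ and $b$ through $x$ gives $d(a,b)>2$, so the two threats are disjoint and Alice's single reply cannot cancel both. (Note also that the needed colors are the third colors relative to $\{\gamma,c(u)\}$ and $\{\gamma,c(v)\}$, which need not lie in $\{c(u),c(v)\}$ when $c(u)=c(v)$.) So while your skeleton matches the paper, the substantive step --- correctly locating the threats and the vertices they kill --- is wrong as stated and the proposal does not go through.
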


\begin{figure}[h]
\leavevmode
\begin{center}
\includegraphics{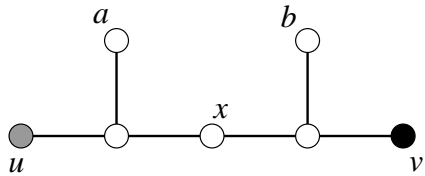}
\end{center}
\caption{Diagram showing $T_1$.  The colors $c(u)$ and $c(v)$ aren't necessarily distinct.}\label{F.doubledanger}
\end{figure}

\begin{proof}
Bob will color the vertex $x$ with $\gamma$, where $\gamma\neq c(u)$ and $\gamma\neq c(v)$.  Coloring $a$ and coloring $b$ with a carefully chosen color are now disjoint winning moves for Bob in the 3-coloring game.
\end{proof}

\begin{lemma}\label{L:V2}
If there is an uncolored $P_4$ subgraph in a tree $T$ that is surrounded by a color $\alpha$ (that is, each vertex on the $P_4$ has a neighbor colored $\alpha$ and is not adjacent to any color other than $\alpha$) and $|V(T)|$ is even, then Bob can win the 3-coloring game on $T$.
\end{lemma}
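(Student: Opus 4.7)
The key observation is that every vertex $p_i$ of the uncolored $P_4$ subgraph has a neighbor colored $\alpha$ and no neighbor of any other color, so at each $p_i$ the only legal colors are $\beta$ and $\gamma$. This immediately suggests a canonical winning move for Bob on the $P_4$: once Alice colors some $p_i$ with $c\in\{\beta,\gamma\}$, Bob colors the $P_4$-vertex $p_j$ at distance $2$ from $p_i$ with $c'\in\{\beta,\gamma\}\setminus\{c\}$. The vertex $p_m$ lying between $p_i$ and $p_j$ then has neighbors carrying all three colors $\alpha,c,c'$ and so is uncolorable. Bob's entire strategy therefore aims to force Alice to be the first to play on the $P_4$.

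Bob accomplishes this by avoiding the $P_4$ whenever possible, playing on some uncolored non-$P_4$ vertex on each of his turns. The parity hypothesis $|V(T)|$ even, combined with the fact that this lemma is invoked at a moment when it is Alice's turn (an even number of moves have been made, so an even number of vertices are colored, so the number of uncolored vertices is even, hence so is the count of uncolored non-$P_4$ vertices), ensures that when both players play only outside the $P_4$, the non-$P_4$ uncolored vertices are exhausted on one of Bob's turns, leaving Alice to open the $P_4$. Bob then wins in one more move as above.

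The main obstacle is that Alice's non-$P_4$ moves may restrict the legal colors at a $p_i$ by coloring one of its off-$P_4$ uncolored neighbors with $\beta$ or $\gamma$, which could in principle block Bob's intended winning move by making the target color at $p_j$ illegal. The counter-strategy that I would develop for Bob, which I expect to be the most delicate step, is the following: whenever Alice restricts some $p_i$ to a single color, Bob responds either by (i) making an analogous restriction at another $P_4$-vertex $p_k$ so that two vertices at distance $2$ on the $P_4$ are forced to the same color, creating a position from which Lemma~\ref{L:V1} delivers the win; (ii) coloring a second off-$P_4$ neighbor of $p_i$ with the complementary color, rendering $p_i$ itself uncolorable on the spot; or (iii) producing an inconsistent pair of forced colors at two adjacent $P_4$-vertices, so that whichever is colored first leaves the other with no legal color. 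Verifying that at least one of (i)--(iii) is always available uses the surrounding-by-$\alpha$ hypothesis and the tree structure; once this is in place, the canonical winning move together with the parity count completes the argument.
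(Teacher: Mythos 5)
Your skeleton (the distance-two kill once Alice opens the $P_4$, plus the parity count forcing Alice to open it) is the same as the paper's, but the step you flag as ``the most delicate'' and leave unverified is precisely where your proposal is incomplete, and the trichotomy you sketch does not close it. Option (ii) as stated needs a \emph{second} uncolored off-$P_4$ neighbor of the restricted vertex $p_i$, which need not exist (e.g.\ when $p_i$ has small degree, as in the degree-$3$ trees this lemma is actually applied to); options (i) and (iii) are not shown to be available either, and no argument is given that at least one of (i)--(iii) can always be executed. So as written the proof does not go through in the one case that matters.

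The missing observation, which is how the paper handles it, is that no counter-strategy machinery is needed: if Alice colors a neighbor $w$ of some $P_4$-vertex $p_i$ with a color $\beta\neq\alpha$, Bob wins \emph{immediately}. Indeed $p_i$ now sees $\alpha$ and $\beta$, and $p_i$ has an uncolored $P_4$-neighbor $p_{i\pm1}$ whose colored neighbors are still only $\alpha$-colored (since $T$ is a tree, $w$ cannot be adjacent to two vertices of the $P_4$), so $\gamma$ is legal for $p_{i\pm1}$; Bob colors $p_{i\pm1}$ with $\gamma$, and $p_i$ is uncolorable, ending the game. Thus Alice can never profitably touch the neighborhood of the $P_4$ with a non-$\alpha$ color, and the only remaining point --- which you also omit --- is what Bob does when his own ``play outside the $P_4$'' move must land on a neighbor of the $P_4$: he colors it $\alpha$, so the $P_4$ stays surrounded and the parity argument you gave then finishes the proof exactly as in the paper.
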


\begin{proof}
If Alice colors a vertex of the $P_4$ with $\beta\neq\alpha$, Bob can color a vertex of the $P_4$ at distance two away with a third color $\gamma$.  This leaves a vertex with no legal color available, so Bob wins if Alice plays first in the surrounded $P_4$.  If Bob ever has to color a neighbor of the surrounded $P_4$, he may safely color it with $\alpha$ so that the $P_4$ remains surrounded.  If Alice ever colors a neighbor of the surrounded $P_4$ some color other than $\alpha$ (so that it is no longer surrounded by $\alpha$) Bob can immediately play a winning move at distance two away from Alice's move.  Because $|V(T)|$ is even, Bob can force Alice to play first in the surrounded $P_4$, if he hasn't already won.  Thus Bob will win the 3-coloring game on $T$.
\end{proof}

\begin{lemma}\label{L:V3}
Let $T_2$ be the partially colored tree shown in Figure \ref{F.twoarms} where $c(u)=c(v)$ and $x,a$, and $b$ can be colored with $c(u)$.  If $T_2$ is a subgraph of $T$, $|V(T)|$ is even, and it is Bob's turn then Bob can win the 3-coloring game on $T$.
\end{lemma}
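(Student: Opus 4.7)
The plan is to reduce the position to one of the two established winning configurations, Lemma \ref{L:V1} or Lemma \ref{L:V2}, through a carefully chosen first move of Bob combined with a case analysis of Alice's response. The hypothesis $c(u)=c(v)=\alpha$ blocks a direct application of Lemma \ref{L:V1}, so Bob must first introduce a second color into the region between $u$ and $v$ in order to produce the double-threat structure that Lemma \ref{L:V1} exploits.

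First, Bob colors the central vertex $x$ with a color $\beta\neq\alpha$. Since $x$ is uncolored and (by the hypothesis that $x$ can be colored $\alpha$) has no neighbor colored differently from $\alpha$, every color distinct from $\alpha$ is legal at $x$. After this move, the two arms emanating from $x$ have endpoints carrying two distinct colors ($\beta$ at $x$ and $\alpha$ at $u$ on one side, and symmetrically $\beta$ at $x$ and $\alpha$ at $v$ on the other). Each arm now locally resembles the configuration of Lemma \ref{L:V1}, with $x$ playing the role of the center and $a$ (respectively $b$) sitting in a double-danger position: a well-chosen color at a specific neighbor of $a$ (respectively $b$) would be a winning move.

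Next, I would analyze Alice's response. Because the two potential winning moves lie at distance greater than $2$ in the two disjoint arms, Alice can neutralize at most one of them; on his following turn Bob plays the other, and finishes via the mechanism of Lemma \ref{L:V1}. The subtle case is when Alice's move neutralizes one threat indirectly by converting the opposite arm into an uncolored $P_4$ surrounded by the color $\alpha$. In this subcase Bob abandons the Lemma \ref{L:V1} reduction and instead appeals to Lemma \ref{L:V2}, whose hypothesis that $|V(T)|$ is even is supplied precisely by the statement of the present lemma; by the parity of the number of uncolored vertices together with the fact that it is now Alice's turn to move at the start of the Lemma \ref{L:V2} subgame, Bob forces Alice to be the first player inside the surrounded $P_4$ and wins.

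The main obstacle I anticipate is the enumeration of Alice's first response, particularly when she plays at $a$ or $b$ themselves or at a neighbor of $a$ or $b$ with a specially chosen color, since such moves can simultaneously alter the threat structure in both arms. Each such response must be checked by hand to verify that either a residual Lemma \ref{L:V1} configuration survives on one arm or a residual Lemma \ref{L:V2} configuration survives on the other; the parity bookkeeping for the Lemma \ref{L:V2} branch is the delicate point, since each move by either player flips the parity and one must be sure that the invariant ``it is Alice's turn to break the surrounded $P_4$'' is preserved through Bob's subsequent safe responses outside the dangerous region.
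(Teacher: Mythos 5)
There is a genuine gap, and it starts with your very first move. You justify coloring $x$ with $\beta\neq\alpha$ by saying that ``$x$ can be colored $\alpha$'' means $x$ has no neighbor colored differently from $\alpha$ --- that inverts the hypothesis: it means no neighbor of $x$ is colored $\alpha$, and says nothing about other colors. Since $T_2$ is only a subgraph of $T$, the lemma guarantees \emph{only} that $c(u)=\alpha$ is legal at $x$, $a$, and $b$; neither your opening move $\beta$ at $x$ nor the ``well-chosen'' third colors you later need near $a$ and $b$ are guaranteed to be legal. The same issue is exactly why the paper's proof has Bob color $x$ with $c(u)$ itself and build all of his primary threats out of the one color he is promised.

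The second problem is structural: $T_2$ is not a rescaled copy of the Lemma \ref{L:V1} configuration, so one move at $x$ does not produce two disjoint immediate winning moves at (or near) $a$ and $b$. If it did, the lemma would follow from Lemma \ref{L:V1} directly (that lemma already allows $c(u)=c(v)$) and the evenness hypothesis would be superfluous. In the paper, $T_2$ is a longer two-armed tree with auxiliary vertices $w,y,z$, and after Bob plays $\alpha$ at $x$ the threats are of two kinds: completing a $P_4$ surrounded by $\alpha$ on either arm (won by the parity argument of Lemma \ref{L:V2} --- this is where $|V(T)|$ even is genuinely used), and, when Alice herself introduces a second color at $a$ or $b$ or plays outside $T_2$, a Lemma \ref{L:V1} reduction through the vertex $z$, where Bob takes $\gamma=c(a)$ or $c(b)$ so that legality and the ``colored neighbors are $\gamma$'' condition come for free from Alice's own move. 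Your plan discards the monochromatic-$\alpha$ environment that makes the surrounded-$P_4$ trap possible, and the case analysis of Alice's replies --- which you defer as an ``anticipated obstacle'' --- is precisely the content of the lemma, so as written the argument does not go through.
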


\begin{figure}[ht]
\leavevmode
\begin{center}
\includegraphics{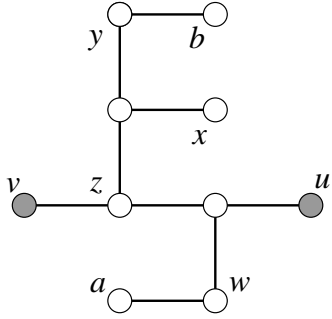}
\end{center}
\caption{Diagram showing $T_2$.  $c(u)=c(v)$ is an available color for $x,a,b$.}\label{F.twoarms}
\end{figure}

\begin{proof}
Bob will color $x$ with $c(u)$.  If Alice colors any neighbors of $u,v$, or $x$ or colors either $w$ or $y$ with any color not $c(u)$, then coloring a vertex at distance 2 away from Alice's move with a carefully chosen color is a winning move for Bob.  If Alice colors $w$ or $y$ with $c(u)$, then Bob can color $b$ or $a$ (respectively) with $c(u)$ to create a $P_4$ surrounded by $c(u)$ and win by Lemma \ref{L:V2}.  If Alice colors $a$ or $b$ with $c(u)$, then she has created a surrounded $P_4$ and Bob will win the 3-coloring game by Lemma \ref{L:V2}.  Alice's other possible moves are to color $a,b$ with a color other than $c(u)$, or to color a vertex outside of $T_2$.  In the former case Bob will color $z$ with $c(a)$ or $c(b)$ (depending on which Alice colored), and in the latter case Bob will color $z$ with any color not $c(u)$.  In both cases this is winning for Bob by Lemma \ref{L:V1} 
\end{proof}

In the proofs of the following Lemmas, Bob will frequently employ a strategy called a \emph{forcing move}.  This is where Bob colors a vertex $u$ at distance 2 from a colored vertex $v$ to create a winning move on the trunk containing $u$ and $v$.  Alice must respond in this trunk, or she will lose on Bob's next turn.  This allows Bob to use a sequence of forcing moves to produce a desired subgraph.

\begin{lemma}\label{L:V6}
Let $T_3$ be the partially colored tree shown in Figure \ref{F.uhoh2} where $c(u)=c(w)=c(x)$ and $c(v)=c(y)=c(z)$, but $c(u)\neq c(v)$. If $T_3\in\mathcal{R}(T)$ and $|V(T)|$ is even, then Bob can win the 3-coloring game on $T$.
\end{lemma}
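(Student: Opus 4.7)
The plan is to have Bob play an opening move at a carefully chosen vertex $p$ of $T_3$ whose coloring with the third color $\gamma$ (different from $\alpha = c(u)$ and $\beta = c(v)$) splits the trunk into smaller trunks matching the hypotheses of one of the preceding lemmas. Since $T_3$ is a trunk whose six colored leaves are balanced between three $\alpha$-leaves $u,w,x$ and three $\beta$-leaves $v,y,z$, and since $T$ has maximum degree $3$, the paths between opposite-colored leaves must pass through a common region, giving a natural candidate for $p$. First I would verify that $\gamma$ is legal for $p$, which holds because $\gamma$ is used on none of $u,v,w,x,y,z$ and $p$ has at most one colored neighbor in the trunk. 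The choice of $p$ is made so that each resulting sub-trunk, after Bob's move, matches Lemma \ref{L:V1} up to one additional forcing move, or contains a $P_4$ one move away from being surrounded by a single color.

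After Bob's opening, Alice's response falls into three categories. If Alice plays outside $T_3$, Bob uses a forcing move in $T_3$ to produce the double-threat configuration of Lemma \ref{L:V1}, winning directly. If Alice plays at distance exactly two from one of the colored leaves using a color that forces a middle vertex to become uncolorable, she has already lost. Otherwise Alice plays defensively inside $T_3$, blocking at most one of Bob's threats; by the symmetry between the $\alpha$-leaves and $\beta$-leaves, the mirrored threat on the opposite side of $p$ remains intact, and Bob executes it by a forcing move that either creates a $P_4$ surrounded by a single color (invoking Lemma \ref{L:V2}) or produces the configuration required by Lemma \ref{L:V3}.

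The parity hypothesis that $|V(T)|$ is even is preserved across each exchange of a forcing move by Bob and the forced response by Alice, since each such exchange colors exactly two vertices. Consequently when Lemma \ref{L:V2} or Lemma \ref{L:V3} is ultimately invoked, the remaining uncolored-vertex count of $T$ still has the correct parity, so Alice is forced to be the first to play in the surrounded $P_4$ (or in the critical position of $T_2$), and Bob wins. The main obstacle will be the enumeration of Alice's defensive responses in the third case: with six colored leaves and several degree-$3$ branching vertices, one must verify that every defensive response leaves Bob with a legal winning follow-up. The subtlest subcase will be when Alice colors a vertex adjacent to $p$ with a color that simultaneously interacts with both sides of $p$; there Bob must choose his follow-up color carefully so as to match the hypotheses of Lemma \ref{L:V3} (equalized colors on the two arms) rather than only Lemma \ref{L:V1}, and must rely on the third leaf of each color to rebuild the appropriate symmetric configuration.
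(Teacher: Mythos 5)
There is a genuine gap, and it begins with the game state itself. This lemma is invoked (and proved in the paper) at a moment when it is \emph{Alice's} turn: $T_3$ is a trunk that Bob has just finished creating, so Bob's strategy must be purely reactive. Your plan opens with ``Bob plays an opening move at a carefully chosen vertex $p$,'' after which ``Alice's response falls into three categories.'' Bob does not get such an opening move; in the standard $3$-coloring game he cannot pass, and in this position he must first answer whatever Alice does. At best your argument would prove the weaker statement ``Bob wins if it is Bob's turn,'' which is not the statement the later lemmas (in particular the reduction used in Lemma \ref{L:V7} and ultimately Theorem \ref{T:V1}) rely on. The paper's proof instead labels the $u,v$-path as $x_0=u,\dots,x_9=v$, enumerates Alice's possible first moves (near $z$, at $b=x_5$, at $a$ or outside $T_3$, or on the right half by symmetry), and for each one gives Bob an explicit sequence of forcing moves toward $u$ that ends either with Bob coloring $x_4$ (winning via Lemma \ref{L:V1}) or coloring $x_3$ with $c(u)$ so as to surround an uncolored $P_4$ (winning via Lemma \ref{L:V2}, which is where the parity hypothesis is spent).

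Even setting the turn-order issue aside, the proposal does not contain the proof's actual content. The vertex $p$ is never identified, the claimed ``common region'' through which all opposite-colored paths pass is asserted from guessed structure rather than from the tree in Figure \ref{F.uhoh2} (and you additionally import $\Delta(T)=3$, which the lemma does not assume), and the decisive step --- verifying that every one of Alice's defensive replies leaves Bob a legal winning follow-up --- is explicitly deferred as ``the main obstacle.'' That case analysis \emph{is} the proof; without it, and without concrete forcing sequences tied to the actual positions of $u,w,x,v,y,z,a,b$ on $T_3$, the argument cannot be checked and does not establish the lemma. The correct repair is to adopt the paper's reactive scheme: treat each of Alice's possible moves, use forcing moves (each exchange colors two vertices, so the parity needed for Lemma \ref{L:V2} is indeed preserved, as you note) to push the action toward $u$, and finish with Lemma \ref{L:V1} or a surrounded $P_4$ and Lemma \ref{L:V2}.
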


\begin{figure}[ht]
\leavevmode
\begin{center}
\includegraphics[width=.85\linewidth]{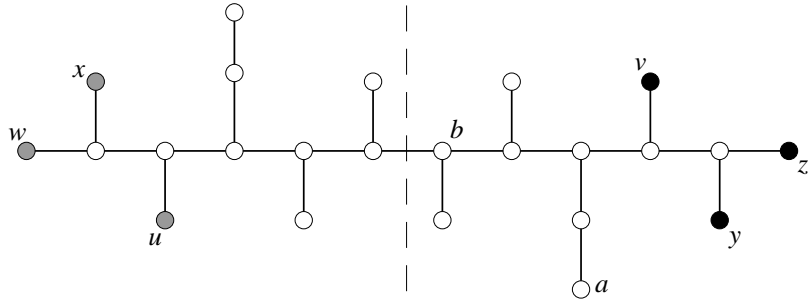}
\end{center}
\caption{Diagram showing $T_3$. By symmetry, we assume Alice colors a vertex to the right of the dotted line.}\label{F.uhoh2}
\end{figure}

\begin{proof}
Since $T_3$ is symmetric, we will only consider Alice's move to the right of the dotted line.  We label the vertices on $P_{u,v}$ with $x_i$ where $i=d(u,x_i)$ so $x_0=u$ and $x_9=v$.  If Alice colors the neighbor of $z$, then coloring $x_7$ with a carefully chosen color is a winning move for Bob.  If Alice colors $b=x_5$ then Bob wins by Lemma \ref{L:V1} since $d(b,v)=4$.  
If Alice colors $a$, the neighbor of $a$, or any vertex not in $T_3$, then Bob will color $x_7$ with an available color other than $c(v)$. Now Bob has a winning move in the trunk $R_{x_8}$, so his coloring of $x_7$ was a forcing move.  After Alice responds in the trunk $R_{x_8}$, Bob continues to play as if Alice's first move was to color $x_7$.  If Alice's first move was to color $x_7$ or any other vertex $c$ on the right side of $T_3$ (other than $b$, a case which is already covered), then Bob will be able to play a sequence of forcing moves in the direction of $u$ until Bob colors $x_4$ with any color or $x_3$ with $c(u)$.  Because it was a forcing move, Alice will have to respond to Bob's most recent move in a trunk other than $R_{x_1}$. Then Bob will color in the trunk $R_{x_1}$ so that he wins by Lemma \ref{L:V1} or so that he surrounds an uncolored $P_4$ with $c(u)$ and wins by Lemma \ref{L:V2}, depending on if he last colored $x_4$ or $x_3$.

Thus, Bob always wins the 3-coloring game on $T$ if $T_3\in\mathcal{R}(T)$ and $|V(T)|$ is even.
\end{proof}

\begin{lemma}\label{L:V7}
Let $T_4$ be the partially colored tree shown in Figure \ref{F.uhoh1} where $c(u)=c(w)=c(x)$ but $c(u)\neq c(v)$.  If $T_4\in\mathcal{R}(T)$ and $|V(T)|$ is even, then Bob can win the 3-coloring game on $T$.
\end{lemma}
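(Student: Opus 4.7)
The approach is to reduce $T_4$ to the configuration of $T_3$ in Lemma \ref{L:V6} through a short sequence of forcing moves. Observing that $T_4$ differs from $T_3$ only in that the vertices $y$ and $z$ on the $v$-side are uncolored, Bob's plan is to manufacture colored vertices at these positions bearing the color $c(v)$; once installed, the relevant trunk looks exactly like $T_3$ with $|V(T)|$ still even and Bob to move, and so Lemma \ref{L:V6} applies.

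Concretely, Bob's opening move is a forcing move on the $v$-side: he colors a vertex at distance 2 from $v$ with a carefully chosen color so as to threaten an immediate winning move in the trunk containing $v$. Alice must respond within this trunk or lose on Bob's next turn. Provided her only safe response is to color the intermediate vertex with $c(v)$, Bob has installed one of the required $c(v)$-colored vertices. Bob then repeats this procedure with a second forcing move to install the other. Each forcing-response pair colors two vertices, so the parity of the remaining uncolored vertex count (and hence whose turn it is after the pair) is preserved, leaving the hypothesis $|V(T)|$ even intact for the eventual appeal to Lemma \ref{L:V6}.

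What remains is an enumeration of Alice's possible deviations from the expected defensive response. These cases break into: Alice produces an uncolored $P_4$ surrounded by a single color (Bob wins by Lemma \ref{L:V2}), Alice creates a $T_1$-type configuration around some uncolored vertex (Bob wins by Lemma \ref{L:V1}), Alice's move leaves Bob an immediate direct winning move, or Alice plays outside the trunk (allowing Bob a ``free'' continuation of his reduction via another forcing move). The main obstacle is the case analysis itself: unlike the symmetric $T_3$ treated in Lemma \ref{L:V6}, the configuration $T_4$ is pre-colored symmetrically only on the $u$-side, so Alice has strictly more meaningful replies on the $v$-side than in the earlier lemma. Bob must therefore verify that each such reply either leads directly to one of the invokable winning configurations or permits the continuation of the reduction to $T_3$, while carefully tracking parity so that the final application of Lemma \ref{L:V6} remains valid.
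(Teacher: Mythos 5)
There is a genuine gap: the central mechanism of your plan---using forcing moves to ``install'' vertices colored $c(v)$ at the positions $y$ and $z$ so that the trunk becomes $T_3$---cannot be carried out. A forcing move only threatens the common neighbor of Bob's newly colored vertex and a nearby colored vertex; Alice defuses it by coloring that threatened vertex (or its remaining uncolored neighbor) with whatever legal color is available. Nothing allows Bob to dictate that her reply be the specific off-path vertex $y$ colored with the specific color $c(v)$, so the clause ``provided her only safe response is to color the intermediate vertex with $c(v)$'' is precisely the step that fails. If Bob instead colors $y$ and $z$ himself, those moves are not threats, and Alice's free replies (note that in the position of this lemma it is Alice, not Bob, who moves next: the lemma is invoked in Lemma \ref{L:V8} immediately after Bob colors $z$, and its proof must open with Alice's move, as the proofs of Lemmas \ref{L:V6} and \ref{L:V8} do) can color a vertex on $P_{u,v}$, splitting the trunk so that the $T_3$ configuration---which must occur as an entire trunk of $\mathcal{R}(T)$, with all its colored vertices leaves of that trunk---is never attainable. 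The hand-off is also parity-reversed: Lemma \ref{L:V6} concerns the $T_3$ position with Alice to move, so delivering that position ``with Bob to move'' is not covered by it.

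For comparison, the paper's proof uses the reduction to $T_3$ only in the single subcase where Alice herself chooses to color $y$: she is then compelled to use $c(v)$ (otherwise Bob has an immediate winning move), and Bob completes the configuration by coloring $z$ with $c(v)$. Every other first move by Alice is answered not by building $T_3$ but by a march of forcing moves toward $u$ or toward $v$ (whichever side has distance greater than $5$ from Alice's move), terminating in Lemma \ref{L:V1}, \ref{L:V2}, or \ref{L:V3} according to the parity of that distance, plus a separate analysis when Alice colors $b$. That case analysis, which your proposal explicitly defers, is the substance of the argument and cannot be replaced by the reduction idea alone.
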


\begin{figure}[ht]
\leavevmode
\begin{center}
\includegraphics[width=.85\linewidth]{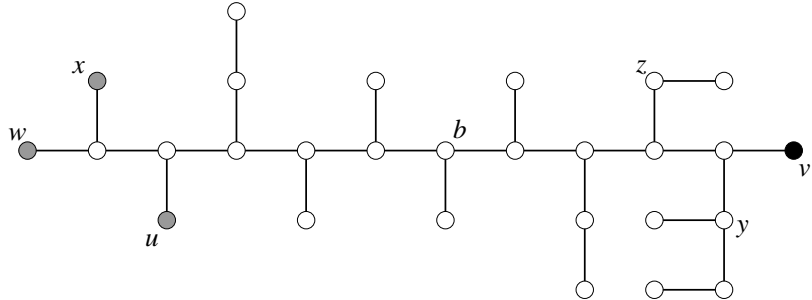}
\end{center}
\caption{Diagram of $T_4$.}\label{F.uhoh1}
\end{figure}

\begin{proof}
If Alice colors a vertex not in $T_3$ or one whose distance from the $u,v$-path is greater than 1, then Bob can play as if Alice just colored
$v$ and make forcing moves towards $u$, winning by Lemma \ref{L:V1}.  So suppose Alice colors a vertex whose distance is at most 1 from the 
$u,v$-path.  If Alice colors any vertex $a$ other than $b$ or $y$ then Bob can apply a sequence of forcing moves towards $u$ if $d(u,a)>5$ or towards $v$ if $d(v,a)>5$.  If Bob makes forcing moves towards $u$, he will win by Lemma \ref{L:V1} if $d(u,a)$ is even and by Lemma \ref{L:V2} if $d(u,a)$ is odd (Bob will be able to use $c(u)$ on his last forcing move and then surround an uncolored $P_4$ with $c(u)$).  If Bob makes forcing moves towards $v$, he will win by Lemma \ref{L:V1} or Lemma \ref{L:V3} depending on if $d(v,a)$ is even or odd (respectively).

Now we consider when Alice colors at $y$ or $b$.  If Alice colors $y$ then either Bob has an immediate winning move, or can reduce to Lemma \ref{L:V7} by coloring $z$ with $c(v)$. If Alice colors $b$, then either $c(b)\neq c(u)$ or $c(b)\neq c(v)$.  Bob will win by playing a forcing move towards $u$ with $c(u)$ or towards $v$ with $c(v)$, using whichever color is different from $c(b)$.  After Alice responds, Bob will win by Lemma \ref{L:V2} or Lemma \ref{L:V3}.
\end{proof}

\begin{lemma}\label{L:V8}
Let $T_5$ be the partially colored tree shown in Figure \ref{F.smallestsubgraph} where $c(u)\neq c(v)$.  If $T_5\in\mathcal{R}(T)$ and $|V(T)|$ is even, then Bob can win the 3-coloring game on $T$.
\end{lemma}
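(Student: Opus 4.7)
The plan is to reduce Lemma \ref{L:V8} to the previously established Lemma \ref{L:V7} via a sequence of forcing moves. Recall that $T_4$ required $c(u)=c(w)=c(x)$ while the hypothesis of $T_5$ asks only that $c(u)\neq c(v)$. Bob's task is therefore to build up the missing colored vertices along $P_{u,v}$ so that the configuration of $T_4$ (or $T_3$) arises as a partial coloring during play. The parity hypothesis that $|V(T)|$ is even ensures Bob retains control of turn parity throughout, which is essential for invoking the earlier lemmas that themselves depend on parity.

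First I would have Bob play a forcing move near $u$, coloring a vertex at distance 2 from $u$ on $P_{u,v}$ with $c(u)$. This creates a winning threat on the intermediate vertex, so Alice must respond in the affected trunk on her next turn. If she responds by coloring the intermediate vertex, the only safe color choices extend the monochromatic $c(u)$-region in a way that brings the partial coloring closer to the configuration of $T_4$. If she instead responds elsewhere in that trunk, Bob executes his stored winning move; and if she plays outside the affected trunk entirely, Bob wins immediately by exploiting his threat or by invoking Lemma \ref{L:V1} on the resulting pair of dangerous vertices.

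I would then iterate the forcing strategy, each time placing a new colored vertex further along $P_{u,v}$ with the color required to match the $T_4$ pattern. After a bounded number of iterations, $T_4$ appears (up to relabeling) as a subtree of the current partial coloring of $T$ and Bob invokes Lemma \ref{L:V7} to win. Whenever Alice's deviations threaten to spoil the pattern, Bob pivots: if her move creates a $P_4$ surrounded by a single color, he applies Lemma \ref{L:V2}; if it creates the $T_2$ configuration of Lemma \ref{L:V3}, he applies that instead; and if it creates the $T_3$ configuration, he applies Lemma \ref{L:V6}. Because each of Bob's forcing moves consumes two vertices (one by Bob, one by the forced Alice response), the parity of the uncolored vertices is preserved, so every pivot occurs in a position still satisfying the evenness hypothesis required by the earlier lemmas.

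The main obstacle will be the case analysis on Alice's possible deviations. The most delicate subcase is when Alice colors a branch vertex adjacent to $P_{u,v}$ with the third color — neither $c(u)$ nor $c(v)$ — since this neither extends the $T_4$ pattern nor immediately triggers a previous lemma. There Bob must decide between building toward $T_3$ via Lemma \ref{L:V6} or toward $T_2$ via Lemma \ref{L:V3}, and the correct choice depends on the parity of the distance from Alice's move to $u$. A second delicate point is handling moves Alice plays entirely outside the displayed subgraph $T_5$: Bob must continue his forcing chain as if Alice had merely passed, while verifying that $T$ still contains enough room to complete the construction. Enumerating these subcases in a unified table, and confirming that parity is preserved in each branch, is the bulk of the proof but is essentially routine given the templates set by the proofs of Lemmas \ref{L:V6} and \ref{L:V7}.
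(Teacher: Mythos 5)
Your proposal does not track how this lemma actually has to work, and its central mechanism is broken. First, as the lemma is used in Theorem \ref{T:V1} Bob has just colored $v$, so the position of $T_5$ is one with \emph{Alice} to move; the entire content of the proof is a case analysis of Alice's next move (on the $u,v$-path, at distance $1$ from it, at the special vertex $y$, or outside $T_5$). Your strategy instead opens with a move by Bob and never analyzes Alice's opening options inside $T_5$, which is exactly the part that needs an argument. Second, your engine --- ``Bob colors a vertex at distance 2 from $u$ on $P_{u,v}$ with $c(u)$; this creates a winning threat on the intermediate vertex'' --- is false. With both of its colored neighbors bearing the single color $c(u)$, the intermediate vertex still has two legal colors and (since $\Delta(T)=3$) at most one uncolored neighbor, so no single move of Bob can make it uncolorable; a forcing move must introduce a \emph{second} color near the threatened vertex. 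Because of this, Alice is under no obligation to respond, and even when she answers a genuine threat she may kill it by coloring the threatened vertex with either remaining color, so she cannot be steered into ``extending the monochromatic $c(u)$-region.'' There is no way to force-build the $T_4$ pattern against a non-cooperating Alice.

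The actual proof wins differently in almost every branch: when Alice plays outside $T_5$ or far from the path, Bob runs a forcing chain from $u$ toward $v$ or from $v$ toward $u$ --- choosing the side Alice's single move cannot have spoiled --- and finishes with Lemma \ref{L:V3}; when Alice plays near the path but away from $y$, her move is far from one end and Bob's forcing chain toward that end terminates in Lemma \ref{L:V1} or Lemma \ref{L:V3}. The reduction to $T_4$ and Lemma \ref{L:V7} occurs only in the one branch where Alice colors $y$: every color other than $c(u)$ there hands Bob an immediate winning move, so she plays $c(u)$ and Bob answers by coloring $z$ with $c(u)$, producing $T_4$. Lemma \ref{L:V6} is never invoked and cannot be manufactured here (it needs six colored vertices in prescribed positions), and your concern about ``enough room in $T$'' is moot since $T_5\in\mathcal{R}(T)$ already fixes the uncolored structure Bob exploits. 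So while your parity bookkeeping and the general idea of forcing chains are in the right spirit, the proposal as written has a genuine gap: the forcing moves you describe are not threats, and the claimed convergence to $T_4$ would not survive Alice's legal replies.
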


\begin{figure}[ht]
\leavevmode
\begin{center}
\includegraphics[width=.85\linewidth]{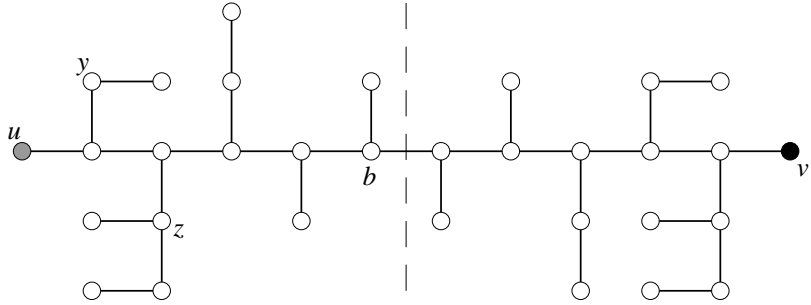}
\end{center}
\caption{Diagram of $T_5$.}\label{F.smallestsubgraph}
\end{figure}

\begin{proof}
If Alice colors a vertex not in $T_5$ or whose distance from the $u,v$-path is greater than 1,
then Bob can either play forcing moves from $u$ to $v$, or from $v$ towards $u$, depending on which side will let Bob win by Lemma \ref{L:V3} (Alice's move may prevent at most one side from meeting conditions for that Lemma).  So suppose Alice colors a vertex whose distance is at most 1 from the  $u,v$-path.  
The subgraph induced by these vertices is symmetric, so we will only consider when Alice's move is to the left of the dotted line.
If Alice colors some vertex $x\neq y$ then $d(v,x)>5$ and Bob can play forcing moves towards $v$ until he wins by Lemma \ref{L:V1} or Lemma \ref{L:V3}.  If Alice colors $y$, she must color it with $c(u)$ or Bob can make an immediate winning move.  If Alice colors $y$ with $c(u)$ then Bob colors $z$ with $c(u)$ and wins by Lemma \ref{L:V7}.
\end{proof}

We are now ready to prove our main result.

\begin{theorem}\label{T:V1}
There exists a tree $T$ with $\Delta(T)=3$ and $\game(T)=4$.
\end{theorem}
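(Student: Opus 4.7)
The plan is to exhibit a concrete tree $T$ of maximum degree $3$ with $|V(T)|$ even, and to describe a strategy by which Bob wins the $3$-coloring game on $T$. Combined with Theorem \ref{t:mcg1} (which gives $\game(T)\le 4$), this will yield $\game(T)=4$.

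Bob's strategy funnels everything through Lemma \ref{L:V8}. Alice moves first; after she colors her opening vertex $a$ with some color $\alpha$, Bob replies by coloring a carefully chosen vertex $b$ with a color $\beta\ne\alpha$. His choice of $b$ will be such that, after Bob's move, the pair $\{a,b\}$ plays the role of the colored-leaf pair $\{u,v\}$ of an embedded copy of $T_5$, and that copy of the underlying graph of $T_5$ coincides exactly with a reduced trunk of the resulting $\mathcal{R}(T)$. Since at that moment $c(u)=\alpha\ne\beta=c(v)$, $|V(T)|$ is even, and it is Alice's turn, Lemma \ref{L:V8} delivers Bob's victory on $T$.

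To make this strategy available, I would construct $T$ as a large, highly symmetric cubic tree in which every vertex $a$ is the endpoint of an ``arm'' that, for some partner vertex $b$ lying elsewhere in $T$, has the shape required to make the trunk formed after both $a$ and $b$ are colored equal to a copy of the underlying graph of $T_5$. A natural candidate is obtained by gluing several copies of the underlying graph of $T_5$ along their interior $u,v$-paths to form a cubic tree; a single pendant leaf can be appended if needed to enforce $|V(T)|$ even, which preserves $\Delta(T)=3$.

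The main obstacle is the case analysis verifying that this construction really gives Bob a valid response to every possible opening move by Alice. By the symmetry of $T$, this reduces to a finite number of orbits of $V(T)$ under the automorphism group, and in each orbit one displays an appropriate partner $b$ together with the induced copy of $T_5$; most cases will be handled uniformly by the periodicity of the construction, with only a few boundary vertices needing ad hoc treatment. The color choice is essentially automatic, since Alice's one forbidden color leaves Bob with two admissible choices, either of which satisfies $c(u)\ne c(v)$.
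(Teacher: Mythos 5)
There is a genuine gap, and it sits exactly where you defer the work. Your strategy requires that after Alice's \emph{arbitrary} first move at a vertex $a$, Bob can in a single move produce a trunk of $\mathcal{R}(T)$ isomorphic to $T_5$ in which Alice's vertex $a$ itself plays the role of one of the two colored leaves $u,v$. That is a universal structural demand on $T$: for every vertex $a$ there must exist a vertex $b$ and a component $C$ of $T-\{a,b\}$ adjacent to both, such that $C$ together with $a$ and $b$ is a copy of $T_5$ with $a,b$ at the attachment points of $u,v$. You never establish that any cubic tree has this property, and it is not plausible: if Alice colors a vertex in the interior of one of your glued $u,v$-paths, or a vertex near the periphery of the (finite) tree, the arms hanging off her vertex simply do not have the $T_5$ shape with the correct attachment points, and these are precisely the cases you wave off as ``boundary vertices needing ad hoc treatment.'' That deferred case analysis is not a technicality; it is the entire content of the proof, and with your construction it fails rather than succeeds. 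In addition, gluing several copies of $T_5$ along their interior $u,v$-paths identifies path vertices that carry pendant leaves in more than one copy, which raises degrees above $3$ and destroys the hypothesis $\Delta(T)=3$ you are trying to realize.

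The paper avoids this trap by not trying to use Alice's vertex as a colored leaf of the $T_5$ trunk at all (except in the single easy case where she colors the hub). It takes three copies of $T_5$ and identifies only the three copies of the leaf $u$, so the hub has degree $3$ and $|V(T)|=3(32)-2=94$ is even. Bob colors the hub himself on his first move; since Alice has made only two moves by the time Bob moves again, by pigeonhole at least one copy of $T_5$ contains none of her colored vertices, and Bob colors that copy's $v$ with a color different from $c(u)$. This produces exactly the hypothesis of Lemma \ref{L:V8} ($T_5\in\mathcal{R}(T)$ with $c(u)\neq c(v)$, $|V(T)|$ even, Alice to move), with Alice's moves quarantined in other trunks. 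This redundancy-plus-pigeonhole device, which makes Bob's two-move setup immune to Alice's interference, is the key idea missing from your proposal; without it (or a proved substitute for your ``every vertex has a $T_5$-shaped arm'' claim), the argument does not go through.
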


\begin{proof}
Consider three copies of $T_5$, we call them $T_5^1$, $T_5^2$, and $T_5^3$.  Let $T$ be the tree obtained from the forest $T_5^1\cup T_5^2\cup T_5^3$ by setting $u^1=u^2=u^3$ where $u^i\in V(T_5^i)$ is the vertex labelled $u$ in Figure \ref{F.smallestsubgraph}. Let $v^i\in T_5^i$ be the vertex labelled $v$ in Figure \ref{F.smallestsubgraph}. The tree $T$ has maximum degree of 3, and $|V(T)|=3(32)-2=94$ is even.  This construction mimics the way $T''$ (see Fig. \ref{F.smallexample}) is built using three copies of $H$ (see Fig. \ref{F.examplesubgraph}).

Bob and Alice play the 3-coloring game on $T$.  Without loss of generality, Alice's first move is on a vertex from $T_5^1$.  If Alice colors $u^1$, then Bob can color $v^1$ using a different color and win by Lemma \ref{L:V8}.  Otherwise, Bob colors $u^1=u^2=u^3$ with any available color.  If Alice's second move is in $T_5^1$ or $T_5^2$, Bob will color $v^3$ with a different color than $c(u^3)$.  If Alice's second move is in $T_5^3$, then Bob will color $v^2$ with a different color than $c(u^2)$.  In either case, Bob wins by Lemma \ref{L:V8}.

Because Bob can always win the 3-coloring game on $T$, we have shown that $\game(T)=4$.
\end{proof}

\section{Conclusions}
We provided criteria to determine the game chromatic number of a forest without vertices of degree 3. When vertices of degree 3 are permitted, the problem seemingly becomes more complex and no classification criteria are known. In fact, there is no known classification criteria for forests of maximum degree 3.

Throughout the work done in this paper, we never distinguished between vertices of degree 4 and vertices of degree more than 4. This surprising observation leads to the following conjecture.
\begin{conjecture}
If $F$ is a forest, there exists $F' \subseteq F$ such that $\Delta(F') \leq \game(F)$ and $\game(F')=\game(F)$.
\end{conjecture}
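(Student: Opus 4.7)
\medskip
\noindent\emph{Proof proposal.} I would proceed by case analysis on $k:=\game(F)$. For $k\leq 2$ the claim is essentially trivial: for $k=0$ take $F'=F$ (empty); for $k=1$ take $F'$ to be a single vertex of $F$; for $k=2$, Theorem~\ref{t:gcn2} forces $F$ to contain an edge, so take $F'$ to be that $K_2$, which satisfies $\Delta(F')=1\le 2$ and $\game(F')=2$. For $k=3$, Bob wins the $2$-coloring game on $F$, so by the proof of Theorem~\ref{t:gcn2}, $F$ must contain one of the small subforests $P_5$ (when $\ell(F)\ge 4$), $P_4^{+}$ (when $\ell(F)=3$ and some diameter-$3$ component is not a path), or $P_4$ (when $\ell(F)=3$ and $|V(F)|$ is even). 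Each of these has maximum degree at most $3$, and each can be verified to have game chromatic number exactly $3$: for $P_5$ and $P_4^{+}$, Alice wins the $3$-coloring game by starting at a central vertex; for $P_4$, a direct check shows $\game(P_4)=3$. In every case the corresponding subforest serves as $F'$.

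For $k=4$, I would induct on the excess-degree count $\Phi(F):=\sum_{v\in V(F)}\max\bigl(0,\,d_F(v)-4\bigr)$. If $\Phi(F)=0$ then $\Delta(F)\leq 4$ and we may take $F'=F$. Otherwise pick a vertex $v$ with $d_F(v)\geq 5$; let $w_1,\ldots,w_m$ with $m\geq 5$ be its neighbors and $C_i$ the component of $F-v$ containing $w_i$. The reductive step is to locate an index $i$ such that $F^{*}:=F-V(C_i)$ still satisfies $\game(F^{*})=4$; since $\Phi(F^{*})<\Phi(F)$, the inductive hypothesis applied to $F^{*}$ then delivers the desired $F'\subseteq F^{*}\subseteq F$. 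The motivating intuition is the observation made just before the conjecture: Bob's threats in the $3$-coloring game at any single vertex $u$ only require three of $u$'s neighbors (colored with three distinct colors), so beyond the fourth neighbor of $v$ each additional branch should be essentially superfluous for Bob's winning strategy.

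To execute the reductive step I would fix a Bob winning strategy $\sigma$ for the $3$-coloring game on $F$ and argue by pigeonhole on the branches $C_1,\ldots,C_m$ at $v$ that at least one $C_i$ is \emph{dispensable} for $\sigma$: informally, $\sigma$ can be replayed using only four of the branches at $v$ for establishing threats at $v$, so there is some $C_i$ in which $\sigma$ need never initiate play. The hard part, which I expect to be the main obstacle, is converting this dispensability into the statement $\game(F^{*})=4$, because game chromatic number is not monotone under vertex deletion and one cannot simply invoke a monotonicity lemma. The plan is to couple plays on $F$ and on $F^{*}$: any hypothetical winning strategy $\tau$ of Alice on $F^{*}$ would be lifted to a winning strategy on $F$ by having Alice play according to $\tau$ on the common part and absorb Bob's forays into $C_i$ by playing greedily inside $C_i$, using the fact that $v$ retains at least four neighbors outside $C_i$ so no new threat at $v$ can be introduced. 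The delicate point is ruling out the scenario in which the deleted branch $C_i$ was in fact serving as Alice's reservoir of ``spare tempi'' for neutralizing threats elsewhere; formalizing the coupling, and in particular controlling how threats propagate across trunks touching the $v$--$C_i$ boundary, would likely require a refinement of Lemma~\ref{l:ecg} adapted to vertex deletion in place of induced-subgraph embedding, and this refinement is where I expect the bulk of the technical work to lie.
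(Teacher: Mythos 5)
This statement appears in the paper as a \emph{conjecture}: the authors offer no proof and explicitly present it as open. So there is no argument of theirs to compare yours against; the only question is whether your proposal settles the conjecture, and it does not. Your treatment of $k\le 3$ is correct but carries almost no content: for $k\le 1$ the claim is vacuous, for $k=2$ an edge suffices, and for $k=3$ the necessity direction of Theorem~\ref{t:gcn2} does hand you one of $P_5$, $P_4^{+}$, or $P_4$, each of which has maximum degree at most $3$ and game chromatic number exactly $3$. The entire substance of the conjecture is the case $k=4$, and there your argument has a genuine gap --- one you flag yourself, but flagging it does not close it.

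Concretely, the reductive step fails in both directions. First, the ``dispensability by pigeonhole'' claim is never established: a winning strategy for Bob on $F$ is adaptive, and nothing in the proposal shows that some branch $C_i$ at a vertex of degree at least $5$ is one Bob never needs. Bob's threats need not be local to $v$; moreover the paper's own Bob strategies (Lemmas~\ref{L:V2}, \ref{L:V3}, \ref{L:V6}--\ref{L:V8}) all carry the hypothesis that $|V(T)|$ is even, because parity of the board determines who can be forced to move first into a dangerous region. Deleting $V(C_i)$ changes that parity and the supply of ``tempo'' moves for both players, so even a branch in which Bob never colors can be essential to his win. Second, the coupling that is supposed to convert dispensability into $\game(F^{*})=4$ is unsound as described: ``absorbing Bob's forays into $C_i$ by playing greedily inside $C_i$'' can lose outright, since $C_i$ is an arbitrary subtree that may contain its own high-degree vertices which become uncolorable under greedy play; and when Bob colors $w_i$ he changes the set of legal colors at $v$ in $F$ in a way that Alice's strategy $\tau$ on $F^{*}$ cannot see. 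You correctly identify that $\game$ is not monotone under vertex deletion and that Lemma~\ref{l:ecg} does not apply to deletion of a branch, but the ``refinement'' you defer to is exactly the missing theorem: as written, the proposal reduces the conjecture to an unproved statement of essentially the same difficulty, so the conjecture remains open.
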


\begin{question}
Let $\mathcal{G}$ be the class of graphs such that for any $G \in \mathcal{G}$, there exists $G' \subseteq G$ such that $\Delta(G') \leq \game(G)$ and $\game(G')=\game(G)$.  What families of graphs are in $\mathcal{G}$?
\end{question}

\end{document}